\documentclass[11pt]{amsart}
\usepackage{fullpage,hyperref,subcaption}
\usepackage{amssymb,amsmath,amsthm,mathtools,extpfeil}
\usepackage[all,cmtip]{xy}
\usepackage{tikz}
\usepackage{rotating,enumerate}
\newtheorem{thm}[equation]{Theorem}
\newtheorem*{thm*}{Theorem}
\newtheorem{lem}[equation]{Lemma}
\newtheorem{prop}[equation]{Proposition}
\newtheorem{cor}[equation]{Corollary}

\theoremstyle{definition}

\newtheorem{rmk}[equation]{Remark}

\numberwithin{equation}{section}

\newenvironment{refthm}[1]
  {\innercustomthm}
  {\endinnercustomthm}

\DeclareMathOperator{\id}{id}
\DeclareMathOperator{\st}{st}

\DeclareMathOperator{\lk}{lk}
\DeclareMathOperator{\img}{im}

\DeclareMathOperator{\vol}{vol}
\DeclareMathOperator{\coker}{coker}

\DeclareMathOperator{\Gr}{Gr}

\DeclareMathOperator{\Lip}{Lip}
\newcommand{\ph}{\varphi}

\begin{document}
\title{Algorithmic aspects of immersibility and embeddability}
\author{Fedor Manin and Shmuel Weinberger}

\begin{abstract}
  We analyze an algorithmic question about immersion theory: for which $m$, $n$,
  and $CAT=\mathbf{Diff}$ or $\mathbf{PL}$ is the question of whether an
  $m$-dimensional $CAT$-manifold is immersible in $\mathbb{R}^n$ decidable?  As
  a corollary, we show that the smooth embeddability of an $m$-manifold with
  boundary in $\mathbb{R}^n$ is undecidable when $n-m$ is even and
  $11m \geq 10n+1$.
\end{abstract}
\maketitle

\section{Introduction}

The problem of classifying immersions of one smooth manifold $M$ in another $N$
was, in a sense, solved by Smale and Hirsch \cite{Smale} \cite{Hirsch}, who
reduced the question to one in homotopy theory.  This is now viewed as an
important example of the philosophy of $h$-principles \cite{GrPDR} \cite{EM}.
While embedding seems to be much harder, many relevant questions have likewise
been reduced to algebraic topology at least in principle, with, in our view, the
signal achievements due to Whitney, Haefliger \cite[e.g.]{HaeBki}, and
Goodwillie--Klein--Weiss \cite[e.g.]{GKW}.

Analogous work has been done, to a less complete degree, in the PL category,
with an analogue of the Smale--Hirsch theorem given by Haefliger and Po\'enaru
\cite{HaeP}.

In this paper we discuss, mainly in the case $N=\mathbb{R}^n$ (or, equivalently,
$S^n$), whether these classifications can actually be performed algorithmically
given some finite data representing the pair of manifolds.  This has
consequences not only for computational topology but also for geometry.

In several papers, Gromov emphasized that topological existence results do not
directly enable us to understand the geometric object that is supposed to exist.
Indeed, the eversion of the sphere took quite a while to make explicit (but can
now be observed in several nice animations).  A basic question is: how
complicated are embeddings or immersions, when they exist?

In \cite{CDMW}, an analogous problem was studied in the case of cobordism, which was reduced to homotopy theory in a similar way by the work of Thom.  In that
case, we showed that if a nullcobordism exists, its complexity can be made at worst slightly superlinear with respect to a natural measure of the complexity of the manifold.

In contrast, in the setting of immersion and embedding, there is sometimes no computable upper bound to the complexity of solutions.  Consider smooth $m$-manifolds $M$ with sectional curvature $\lvert K \rvert \leq 1$ and injectivity radius $\geq 1$; we say such manifolds have \emph{bounded local geometry}.  By results of Cheeger and Gromov, these bounds guarantee that the manifold doesn't have ``too much topology'' per unit volume, and volume can therefore be used as a measure of complexity.  In particular, for every $V$, there are finitely many diffeomorphism types of manifolds of bounded local geometry and volume at most $V$.  By taking the maximum over a finite set of manifolds-up-to-small-deformation, we get a function $F_{m,c}(V)$ such that any such smooth manifold $M$, if immersible in $\mathbb{R}^{m+c}$, has an immersion whose bilipschitz constant and norm of
the second derivative are bounded by $F_{m,c}(\vol M)$.

Our undecidability result Theorem \ref{decundec} then implies:
\begin{cor}
  If $c \leq m/4$ and is even, $F_{m,c}(V)$ is not bounded by any computable
  function.
\end{cor}

The proof follows an outline originating in the work of Nabutovsky \cite{NabSph}.  Suppose a computable bound existed.  Then one could solve the decision problem of whether $M$ is immersible by a brute force search \emph{all} candidate functions (up to a $C^2$-small deformation) whose geometry is below the bound.  If an immersion is not found in this search, then the manifold is not immersible.

In this way logical complexity of decision problems is reflected in lower complexity bounds for solutions of related variational problems.  The difference between this case and the situation in \cite{CDMW} is that in the case of cobordism, the relevant algebraic topology is stable homotopy theory, which is algorithmically tractable, while for immersions the relevant problems are unstable.

\subsection{Immersion vs.~embedding}

Some prior work has been done on the decidability of various questions involving
embeddings.  In a pair of papers from the 1990's, Nabutovsky and the second
author \cite{NW} \cite{NW2} considered the problem of recognizing embeddings,
that is, deciding whether two embeddings of a manifold $M$ in a manifold $N$ are
isotopic.  When $M$ and $N$ are both simply connected, this is decidable as long
as the codimension is not 2; in codimension 2, even equivalence of knots
(embeddings of $S^{n-2}$ in $\mathbb{R}^n$) for $n \geq 5$ is not decidable.

A related result asserted in \cite{NW} says that for closed (even
simply-connected) manifolds the problem of embedding is in general undecidable,
as in our paper, for reasons related to Hilbert's tenth problem.  Here, we study
the special case where the target is a sphere, and do not know what to expect
for the case of closed manifolds embedding in the sphere.

More recent work has considered the problem of embedding simplicial complexes in
$\mathbb{R}^n$.  In \cite{MTW} it is shown that this problem is undecidable in
codimensions zero and one, when $n \geq 5$; in \cite{CKV}, that it is decidable
in the so-called \emph{metastable range}, when the dimension of the complex is
at most roughly $\frac{2}{3}n$.

Between codimension $3$ and the metastable range, embedding theory is best
described via the \emph{calculus of embeddings}, due to Goodwillie, Klein and
Weiss.  This describes smooth embeddings of manifolds via a rather complicated
homotopical construction which nevertheless can be arbitrarily closely
approximated via finite descriptions (see e.g.~\cite{GKW}); unlike in the
metastable range, where work of Haefliger shows that immersion theory is
essentially irrelevant, immersions form the ``bottom layer'' of this
construction.  Thus, understanding immersion theory from a computational point
of view seems to be a good first step towards solving this set of problems.  As
we show, it also directly leads to some results regarding embeddings.

While a similar construction for PL embeddings of simplicial complexes is not
currently in the literature, it seems plausible that such a construction can be
developed and will be quite similar to the smooth version.  We believe that many
variations of the embeddability question can eventually be shown to be
undecidable using this correspondence.

\subsection{Summary of old and new results}

The properties of embedding and immersion questions, including their
decidability, depend heavily on the ratio between the dimensions $m$ and $n$ of
the two objects considered.  Our main result concerns the decidability of
immersibility in $\mathbb{R}^n$.
\begin{refthm}{\ref{decundec}}
  The results break up into the following ranges.
  \begin{description}
  \item[The \emph{stable range}, $m<\frac{1}{2}n+1$] The Whitney immersion
    theorem states that every manifold in this range has an immersion in
    $\mathbb{R}^n$.
  \item[The \emph{metastable range}, $\frac{1}{2}n+1 \leq m<\frac{2}{3}n$] For
    manifolds in this range, both smooth and PL immersibility are always
    decidable.
  \item[$\frac{2}{3}n \leq m<\frac{4}{5}n$] In this range, PL immersibility of
    manifolds in $\mathbb{R}^n$ is decidable, as is smooth immersibility as long
    as $n-m$ is odd.  We do not know whether smooth immersibility in even
    codimension is decidable.
  \item[$\frac{4}{5}n \leq m \leq n-3$] In this range, PL immersibility of
    manifolds is decidable, whereas smooth immersibility is decidable if and
    only if $n-m$ is odd.
  \item[$m=n-2$] In codimension 2, there are two notions of PL
    immersion: in a \emph{locally flat} immersion, links of vertices are always
    unknotted in the ambient space; but one may also study PL immersions which
    are not necessarily locally flat.  Here, smooth immersibility is undecidable
    at least when $n \geq 10$, as is PL locally flat immersibility, which is
    equivalent; PL not necessarily locally flat immersibility is decidable.
  \item[$m=n-1$] In codimension 1, immersibility is decidable.
  \end{description}
\end{refthm}
This parallels the overall picture for embedding theory, about which we still
know much less.  Note that the stable and metastable ranges are slightly
different from the immersion case.
\begin{description}
\item[The stable range, $m \leq n/2$] The Whitney embedding theorem states
  that every manifold in this range has an embedding in $\mathbb{R}^n$.  For
  simplicial complexes, one needs $m<n/2$; for $n=2m$, embeddability is
  obstructed by the Van Kampen obstruction.
\item[The metastable range, $m \leq \frac{2}{3}n-1$] Here the
  embeddability of simplicial complexes is decidable; this is a theorem of
  \v Cadek, Kr\v c\'al and Vok\v r\'inek \cite{CKV}.  Moreover, PL embeddings in
  this range are smoothable, so smooth embeddability is decidable as well.
\item[$\frac{2}{3}n \leq m \leq \frac{10}{11}n$] In this range, nothing is known
  about whether embeddability is decidable; however, see \cite{MTW} for some
  lower bounds on computational complexity.  Moreover, ongoing work of
  Filakovsk\'y, Wagner and Zhechev on the embedding extension problem (is it
  possible to extend an embedding of a subcomplex to an embedding of the whole
  space?) suggests that the more general problem of \emph{classifying}
  embeddings of simplicial complexes up to isotopy is undecidable in the vast
  majority of this range.
\item[$\frac{10}{11}n<m \leq n-2$] The state of the art on embeddability of
  simplicial complexes is much the same here as in the previous range.  However,
  our results on immersions are enough to show:
  \begin{refthm}{\ref{thm:emb}}
    When $\frac{10}{11}n<m \leq n-2$ and $n-m$ is even, the embeddability of a
    smooth $m$-manifold with boundary in $\mathbb{R}^n$ is undecidable.
  \end{refthm}
  \noindent However, the reduction (from Hilbert's tenth problem via the
  immersion problem) which we use to show undecidability creates examples that
  are always PL embeddable; the construction relies on the smooth structure of
  the manifold.  Moreover, we do not know whether embeddability is decidable
  when restricted to closed manifolds; as discussed in \S\ref{S:closed}, the
  method of Theorem \ref{thm:emb} cannot work in that case.
\item[$n=m-1$] Here, PL embeddability is undecidable, as shown in \cite{MTW}.
\end{description}

\subsection{Methods}
Questions of immersibility and embeddability are classically handled by reducing
them first to pure homotopy theory and then reducing the homotopy theory to
algebra.  To resolve any particular instance, then, one has to do the
corresponding algebraic computation.  To decide whether the answers can be
obtained algorithmically, one has to (1) find an algorithm to perform the
reduction and (2) determine whether the resulting algebra problem is decidable.

The homotopy-theoretic side of these questions is fairly well-studied.  Novikov
showed in 1955 that it is undecidable whether a given finite presentation yields
the trivial group; in particular, this means that whether a given simplicial
complex is simply connected is undecidable.  This was extended by Adian to show
that many other properties of groups are likewise undecidable.  Soon after,
Brown \cite{Brown} showed, by way of contrast, that the higher homotopy groups
of a simply connected space are computable.

Much more recently, \v Cadek, Kr\v c\'al, Matou\v sek, Vok\v r\'inek and Wagner
\cite{CKMVW} showed that the set of homotopy classes $[X,Y]$ is in general
uncomputable, even when $Y$ is a simply connected space.  This is because the
problem of determining which rational invariants can be attained is tantamount
to resolving a system of diophantine equations; this is the famously undecidable
Hilbert's tenth problem.

It seems as if fundamental group issues and Hilbert's tenth problem are the only
obstructions to computability in homotopy theory.  The same group of authors,
along with Filakovsk\'y, Franek, and Zhechev, have authored a number of papers
\cite{CKMVW2,FiVo,Vok,CKV,FFWZ} describing algorithms for various problems in
homotopy theory that do not encounter these.  While some open questions do
remain, all of the homotopy-theoretic problems encountered in this paper can
easily be reduced to ones covered by their results.

The main issue, then, is that of the reduction.  The $h$-principles of
Hirsch--Smale \cite{Hirsch} and Haefliger--Po\'enaru \cite{HaeP}, respectively,
show that immersions of codimension $k$ in the smooth and PL categories are
classified via lifts of the stable tangent bundle to the classifying spaces
$BO_k$ and $BPL_k$.  While $BO_k$ can easily be approximated by a Grassmannian
of $k$-planes in a high-dimensional Euclidean space, and therefore classifying
maps are also not difficult to compute, $BPL_k$ is more recalcitrant.  While it
is known to be of finite type, that is, homotopy equivalent to a complex with
finite skeleta, this equivalence is inexplicit and it is not clear how to
algorithmically reduce the tangential data of a PL manifold to a finite amount
of data.  In this paper, we employ various workarounds; the question of
understanding $BPL$ more directly remains open and is also relevant to the
quantitative topology of PL manifolds.

\subsection{Complexity}

Our algorithms do not give any information about the complexity of the
computations.  In many cases, we perform a construction by iterating through all
objects of a given form until we find the needed one; this uses the fact that
its existence is known and that it is algorithmically recognizable.  However,
often such an object only exists when the input is a manifold; this means that
the algorithm will not terminate if presented with an invalid input (for
example, a simplicial complex all of whose links are homology spheres, but not
spheres.)

We believe that this issue can be circumvented and that these algorithms can be
made much more efficient, but this is beyond the scope of this paper.

\subsection{Acknowledgements}

The authors would like to thank the Israel Institute for Advanced Studies for a
stay in fall 2017 during which this work was conceived; Uli Wagner for his
encouragement and helpful comments; and several referees for keeping us honest by
calling out a great deal of handwaving in successive drafts, as well as
suggesting streamlined arguments in a few places.

\section{Effective representation}

In this section, we discuss algorithms and data structures which we borrow from
previous work, as well as new ones to represent certain objects which have not
been worked out in detail previously.

\subsection{Computational homotopy theory}
There has been a fair amount of work on computational homotopy theory: taking a
finite simplicial complex and algorithmically describing its homotopy groups,
Postnikov tower, and so forth.  Here we summarize some of this past work and
cite algorithms for various operations which we will use as building blocks.
\begin{prop} \label{off-the-shelf}
  \begin{enumerate}[(a)]
  \item The isomorphism type of homotopy groups of spheres is computable
    \cite{Brown}.
  \item Moreover, there is an algorithm that, given a finite simply connected
    simplicial complex $X$, computes a generating set and relations for
    $\pi_k(X)$ and an explicit simplicial representative for each generator
    \cite{FFWZ}.  From this, one can compute a simplicial representative for any
    linear combination of the generators.
  \item Given a map $Y \to B$ between simply connected finite simplicial
    complexes or simplicial sets, its relative (or Moore--)Postnikov tower may
    be computed to any finite stage.  The output is given in a form so that the
    (co)homology of every Postnikov stage $P_n$ may be computed, as well as the
    maps
    \[Y \to P_n \to P_{n-1} \to B\]
    and the maps on (co)homology induced by them \cite[Theorem 3.3]{CKV}.
  \item \label{lifts} Given a diagram
    \[\xymatrix{
      A \ar[r] \ar@{^(->}[d] & P_n \ar@{->>}[d] \\
      X \ar[r] \ar@{-->}[ru] & P_{n-1},
    }\]
    where $(X,A)$ is a finite simplicial pair and $P_n \to P_{n-1}$ is a
    (relative) Postnikov stage, the obstruction in $H^n(X,A;\pi)$ to filling in
    the dotted arrow can be computed.  Moreover, if the obstruction is zero, a
    lifting-extension may be constructed \cite[Proposition 3.7]{CKV}.  Finally,
    given a lifting-extension $f:X \to P_n$ and a cochain
    $w \in C^{n-1}(X,A;\pi)$, where $P_n \to P_{n-1}$ is a
    $K(\pi,n-1)$-fibration, one can construct another lifting-extension
    $g:X \to P_n$ such that the obstruction to homotoping $f$ and $g$ is
    $[w] \in H^{n-1}(X,A;\pi)$.
  \item \label{homotopic} Given two maps $X \to S^n$, for any simplicial complex
    $X$, there is an algorithm to determine whether they are homotopic; more
    generally, one can replace $S^n$ with any simply connected finite complex
    $Y$ \cite{FiVo}.  In particular, given an explicit map $S^k \to S^n$, one
    can (by iterating over the combinations) determine its homotopy class as a
    combination of the generators computed in \cite{FFWZ}.
  \item \label{nullh} Given a map $X \to Y$ known to be nullhomotopic, we can
    compute an explicit nullhomotopy.
  \end{enumerate}
\end{prop}
\begin{proof}
  We prove only the parts which are not given a citation in the statement.

  \textbf{Part \eqref{lifts}.} The last part is not explicitly done in
  \cite{CKV}, but the representation of Postnikov stages given there makes it
  easy to modify a map $f:X \to P_n$ by a cochain in $C^{n-1}(X;\pi)$.

  \textbf{Part \eqref{nullh}.} This can be done through an exhaustive search for
  maps from increasingly fine subdivisions of the cone on $X$.
\end{proof}

\subsection{Smooth manifolds}
There are several possible ways of representing smooth manifolds
computationally; as far as we know, this topic has not been thoroughly explored.
According to the Nash--Tognoli theorem, every smooth $n$-manifold embedded in
$\mathbb{R}^{2n+1}$ is closely approximated by a smooth real algebraic variety
cut out by rational polynomials.  This is one way of specifying smooth
manifolds, however it is not clear whether it can be computed from other
possible representations.

\subsubsection{Our model} \label{S:smooth}
For our purposes, compact smooth $n$-manifolds will be specified via $C^1$
triangulations.  (Classically, the categories of $C^1$ and $C^\infty$ manifolds are equivalent, and $C^1$ immersions or embeddings can be approximated by $C^\infty$ ones \cite[\S4]{Munkres}.  Therefore we treat ``smooth'' and ``$C^1$'' as synonymous.)  That is, we take a simplicial complex and specify a polynomial map with rational coefficients of each top-dimensional simplex to $\mathbb{R}^N$, for some $N$, such that the derivatives are nonsingular and
coincide where simplices meet.  (To be precise, given adjacent simplices
$\delta_1,\delta_2:\Delta^n \to \mathbb R^N$, and viewing $\Delta^n$ as a subset
of $\mathbb R^n$, there is an affine transformation
$L:\mathbb R^n \to \mathbb R^n$ such that $\delta_1$ and the map
$\delta_2 \circ L:L^{-1}(\Delta^n) \to \mathbb R^N$ patch together to form a
$C^1$ immersion of the connected set $\Delta^n \cup L^{-1}(\Delta^n)$ in $\mathbb R^N$.)
\begin{rmk}
  To represent every diffeomorphism type of smooth $n$-manifold, it suffices to
  use polynomials of bounded degree, with the bound depending on $n$.  This is
  because, for any smoothly embedded manifold, there is a piecewise polynomial
  approximation of the embedding using splines on a triangulation: start by
  fixing the tangent spaces at vertices, then interpolate via cubic functions on
  each edge, and so on.  If the triangulation is sufficiently fine, so that the
  embedding is close to linear on each simplex, then the approximation will also
  be an embedding.
\end{rmk}

Now suppose that we have a simplicial complex $M$ and map $f:M \to \mathbb R^N$
satisfying these properties.  Under what conditions does this actually define a
closed manifold smoothly immersed in $\mathbb R^N$?  Since there is a
well-defined tangent space at every point, it suffices to show that for every
vertex $v$, the ``derivative'' $Df_v:\st(v) \to T_vM$ (that is, the map whose
restriction to each simplex of the star is the derivative at $v$ of the
restriction of $f$ to that simplex) is a bijection to a neighborhood of the
origin; then these derivatives at every vertex give an atlas of $C^1$ charts for
$M$.  The derivative is easily computed from the polynomial maps defining $f$.
To show that it is injective, it suffices to show that the intersection between
the images of any two simplices is the image of their intersection, which is a
matter of showing that a set of linear inequalities is unsatisfiable.  To show
that it is surjective, it then suffices to show that $H_{n-1}(\lk(v))$ is
nontrivial, since any proper subset of $S^{n-1}$ has trivial $(n-1)$st homology.
Thus whether a given piece of data represents an immersed manifold can be
decided algorithmically.

Since every $n$-manifold embeds in $\mathbb R^{2n+1}$ and every embedding can be
approximated by a $C^1$ piecewise polynomial map with rational coefficients on
a sufficiently fine triangulation, this gives a way of enumerating all smooth
closed $n$-manifolds: one iterates through all pure $n$-dimensional simplicial
complexes and piecewise polynomial maps and checks the conditions to decide
whether the data defines a closed manifold.

We can enumerate compact smooth manifolds with boundary using a similar
strategy.  To check whether a set of data encodes a smooth $n$-manifold $M$ with
boundary, we need to check whether the boundary (defined combinatorially) is a
smooth $(n-1)$-manifold; whether $M$ smoothly embeds at interior vertices; and,
for each boundary vertex $v$, that $Df_v$ injects into the tangent half-space
and that the relative homology group $H_{n-1}(\lk_M(v),\lk_{\partial M}(v))$ is
nontrivial.

\subsubsection{Grassmannians and Pontryagin classes} \label{S:Grass}
Determining explicit triangulations of real Grassmannians is an interesting and
apparently open problem in combinatorial algebraic geometry.  However, it is
well-established that some triangulation can be computed algorithmically.  The
oriented Grassmannian $\Gr_n(\mathbb R^N)$ can be explicitly expressed as an
algebraic variety, for example in $\Lambda^n \mathbb R^N$ via the (double cover
of the) Pl\"ucker embedding which sends an oriented $n$-plane to
$v_1 \wedge \ldots \wedge v_n$, for any ordered orthonormal basis
$v_1,\ldots,v_n$.  And in fact a triangulation can be computed algorithmically
for any semi-algebraic set \cite[Remark 11.19(b)]{BPR}.

The Pl\"ucker embedding also has the advantage that, given the derivative of a
map from an $n$-manifold $M$ to $\mathbb R^N$, the corresponding point in the
Grassmannian can be readily computed as the normalized wedge product of its
columns.  In particular we can compute the value of the classifying map
$\ph:M \to \Gr_n(\mathbb R^N)$ of the tangent bundle at any rational point of
any simplex.  Likewise:
\begin{lem}
  One can compute an upper bound for the Lipschitz constant of $\ph$ with
  respect to the induced Riemannian metrics on $M$ and $\Gr_n(\mathbb R^N)$.
\end{lem}
\begin{proof}
  The global Lipschitz constant is the maximum of the Lipschitz constants of
  each polynomial piece.  On each polynomial piece, an upper bound for $\Lip\ph$
  is given by
  \[\frac{\max\{\lVert D\Phi_x(v) \rVert : x \in \Delta^n, v \in \mathbb R^n, \lVert Df_x(v) \rVert=1\}}{\min \{\lVert \Phi(x) \rVert : x \in \Delta^n\}},\]
  where $\Phi(x)=(Df_x)_1 \wedge \cdots \wedge (Df_x)_n$ is the pre-normalization
  version of $\ph$.  The numerator and denominator are instances of optimization
  of a polynomial function over a semialgebraic set, which is computable via
  \cite[Algorithm 14.9]{BPR}.
\end{proof}

\begin{lem} \label{simp-approx}
  This data suffices to produce a simplicial approximation of $\ph$.
\end{lem}
\begin{proof}
  Given a triangulation of the Grassmannian, thought of as a map
  $\tau:K \to \Gr_n(\mathbb R^N)$ where $K$ is a simplicial complex equipped
  with the standard simplexwise linear length metric, let
  \[s=\frac{\text{inradius of a standard $\dim(\Gr_n(\mathbb R^N))$-simplex}}{\Lip(\tau^{-1})\Lip \ph}.\]
  Note that the inradius of an $r$-simplex is given by
  $\frac{1}{\sqrt{2r(r+1)}}$.  Moreover, $\Lip(\tau^{-1})$ can be computed
  algorithmically.  For every simplex, one must minimize the derivative of a
  polynomial function over that simplex, which can again be done via
  \cite[Algorithm 14.9]{BPR}.  Therefore we can compute a lower bound for $s$.

  The constant $s$ is defined so that for any ball $B$ of radius $s$ in $M$,
  $\ph(B)$ is contained in the star of some vertex of $\tau$.  Suppose we have
  a subdivision $\tau''$ of our triangulation $\tau'$ of $M$ in which the
  diameter of every simplex is at most $s/2$.  By the usual proof of the
  simplicial approximation theorem, see e.g.~\cite[Theorem 2C.1]{Hatc}, the map
  from the $0$-skeleton of $\tau''$ taking each vertex $v$ to the vertex of
  $\tau$ nearest to $\ph(v)$ extends linearly to a simplicial map homotopic to
  $\ph$.

  To compute such a subdivision, we first compute the Lipschitz constant $C$ of
  the triangulation $\tau'$ of $M$ (this boils down to maximizing derivatives of
  polynomial functions with linear constraints).  Then we compute the
  $k$-fold edgewise subdivision of $\tau'$ in the sense of \cite{EdGr}, for some
  $k>4C/s$ (that is, each edge of $\tau'$ is divided into $k$ edges).  This
  subdivision has rational vertices and the simplices of the $k$-fold edgewise
  subdivision of the standard simplex have diameter at most $2/k$.  Therefore
  the diameters of our new simplices are at most $s/2$.
\end{proof}

We would like to use this to compute the rational Pontryagin classes of $M$, by
pulling back simplicial cochains on $\Gr_n(\mathbb R^N)$.  Since
$\Gr_n(\mathbb R^N)$ is a finite simplicial complex, we can compute its rational
cohomology algorithmically as an abelian group finitely generated by explicit cochains, whose simplicial cup products we can also take.

To determine the generators corresponding to the Pontryagin classes $p_1,\ldots,p_{n/4}$, we can use the Pontryagin numbers of products of complex projective spaces.  If $n$ is a multiple of $4$, for every partition $n/4=i_1+\cdots+i_r$, the Pontryagin numbers of $\mathbb CP^{2i_1} \times \cdots \times \mathbb CP^{2i_r}$ are known explicitly, and by a result of Thom \cite[Theorem 16.8]{MilSta}, the matrix of Pontryagin numbers of all such $n$-dimensional products is nonsingular.  If (by induction) we have computed cochain representatives of $p_1,\ldots,p_{n/4-1}$ in $H^*(\Gr_n(\mathbb R^N); \mathbb Q)$, and we have constructed classifying maps for the various $\mathbb CP^{2i_1} \times \cdots \times \mathbb CP^{2i_r}$, then we can solve for $p_{n/4} \in H^n(\Gr_n(\mathbb R^N);\mathbb Q)$.

We can compute explicit embeddings of such products in $\mathbb R^N$ for $N$
sufficiently large, using an explicit embedding of $\mathbb CP^{2i_j}$ as an
affine algebraic variety.  Since Pontryagin classes are stable, we can therefore
compute the corresponding cohomology classes in $\Gr_{n+k}(\mathbb R^{N+k}; \mathbb Q)$ for any $k$.  If we need to reduce $N$, we can pull the classes back along a simplicial approximation to the inclusion
$\Gr_n(\mathbb R^{N_0}) \to \Gr_n(\mathbb R^N)$.  This proves:
\begin{prop}
  There is an algorithm to construct a triangulation of $\Gr_n(\mathbb R^N)$ and
  compute cochain representatives of the Pontryagin classes
  $p_1,\ldots,p_{\lfloor n/4 \rfloor} \in H^*(\Gr_n(\mathbb R^N);\mathbb Q)$.
\end{prop}
Our algorithms will not require the computation of
$p_{\lfloor n/4 \rfloor+1},\ldots,p_{\lfloor n/2 \rfloor}$.

\subsubsection{Other possible models}
We note that there are a number of other ways of specifying a smooth manifold
via a combinatorial structure.  We list some of these here; the extent to which
they can be transformed into each other requires further study.

A more general way of specifying smooth (that is, $C^1$) $n$-dimensional
submanifolds of $\mathbb{R}^N$ is by patching them together from smooth real
semialgebraic sets, with a consistent derivative along the boundaries of the
patches.  This includes the case of a single smooth variety; a triangulated
smooth manifold with semialgebraic simplices; and a handle decomposition with
semialgebraic handles.

One can dispense with the explicit embedding by taking a triangulated manifold
and assigning an element of $\Gr_n(\mathbb{R}^N)$ to each vertex.  If the
triangulation is sufficiently fine, we can send adjacent vertices close enough
to each other (at most some constant distance depending on $n$ and $N$) that one
can interpolate linearly over the simplices, uniquely determining a smooth
structure on the manifold.  One must ensure, of course, that this structure is
compatible with the PL structure.

Finally, one can specify a manifold via an atlas of coordinate patches and
transition functions; for example one may require the patches to be real
semialgebraic and the transition functions to be rational functions (in one
direction).

\subsection{Classifying spaces for spherical fibrations} \label{S:BGn}
Manifold topology makes use of a variety of classifying spaces, the most
familiar of which are the classifying spaces $BO_n$ and $BSO_n$ for unoriented
and oriented vector bundles.  These have relatively straightforward models as
Grassmannians of $n$-planes in $\mathbb{R}^\infty$.  The classifying spaces
$BPL_n$ for $PL$ structures are much more complicated to model combinatorially;
while some work in this direction has been done by Mn\"ev \cite{Mnev}, in this
paper we do not attempt to make $BPL_n$ and classifying maps to it concrete
enough to manipulate algorithmically.  Instead, we use some well-known
computations to avoid talking about $BPL_n$ at all and focus instead on $BG_n$,
the classifying space for the much weaker structure of $S^{n-1}$-fibrations.

Transition functions between fibers in an oriented $S^{n-1}$-fibration are chosen
from the topological monoid $G_n$ of homotopy automorphisms of $S^{n-1}$
homotopic to the identity; this fits into a fiber sequence
\[\Omega^{n-1}_{\pm \id}S^{n-1} \to G_n \to S^{n-1},\]
where the fiber consists of degree $\pm 1$ maps $S^{n-1} \to S^{n-1}$ in the
iterated loop space $\Omega^{n-1}S^{n-1}$ and the fibration is induced by
evaluation at the basepoint.  This monoid has a classifying space $BG_n$; as
$n \to \infty$, this converges to a stable object $BG$.  In order to compute
with $BG_n$ and $BG$, we need to construct finite models for their skeleta as
well as the tautological bundles over them.
\begin{lem} \label{lem:BGn}
  \begin{enumerate}[(i)]
  \item There is an algorithm which, given natural numbers $m$ and $n$,
    constructs a finite simplicial set $B_{m,n}$ which has an $m$-connected map
    to $BG_n$, together with any stage of the relative Postnikov tower of the
    pullback to $B_{m,n}$ of the tautological bundle over $BG_n$.
  \item There is an algorithm that constructs the map $B_{m,n} \to B_{m,m+1}$
    induced by iterated suspension.  Since the map $BG_{m+1} \to BG$ is
    $m$-connected, this map models the stabilization $BG_n \to BG$.
  \item \label{BGniii} There is an algorithm which, given a stable range PL
    embedding $M^m \to \mathbb{R}^{2m+k}$, $k \geq 1$, constructs the classifying
    map $M \to B_{m,m+k}$ of the normal bundle.
  \end{enumerate}
\end{lem}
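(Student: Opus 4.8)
The plan is to construct $B_{m,n}$ as a simplicial approximation to the relevant skeleton of $BG_n$, exploiting the fiber sequence $\Omega^{n-1}S^{n-1} \to G_n \to S^{n-1}$ together with the fact that all the homotopy groups in play are finitely generated and effectively computable. First I would build a bar-construction model for $BG_n$: take the simplicial monoid of based simplicial homotopy self-equivalences of a fixed finite simplicial model of $S^{n-1}$, or more conveniently work with $\Omega^\infty$-style mapping spaces and use that $G_n$ has the homotopy type of a space with finitely generated, algorithmically computable homotopy groups (these are expressible via the homotopy groups of spheres, which are computable by the results of Brown and the more recent effective homotopy theory literature cited in the introduction). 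Applying an effective Postnikov-tower construction — of the kind developed by Čadek–Krčál–Matoušek–Vokřínek–Wagner and used throughout this paper — one produces a finite simplicial set $P$ together with a map $P \to BG_n$ inducing isomorphisms on $\pi_i$ for $i \le m+1$ and a surjection on $\pi_{m+1}$; truncating to a finite subcomplex that carries the $m$-skeleton's worth of information and calling it $B_{m,n}$ gives the $m$-connected map of part (i). The tautological $S^{n-1}$-fibration over $BG_n$ is classified by the identity, so its pullback to $B_{m,n}$ is built into the construction; any finite stage of its relative Postnikov tower is then produced by the same effective Moore–Postnikov machinery applied fiberwise, since the fiber $S^{n-1}$ again has computable homotopy.

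For part (ii), the suspension map $G_n \to G_{n+1}$ sending a self-map of $S^{n-1}$ to its (unreduced, or one-point) suspension is simplicial once one fixes compatible simplicial models $S^{n-1} \hookrightarrow S^n$, and it induces $BG_n \to BG_{n+1}$; iterating and using that $BG_{m+1} \to BG$ is $m$-connected (a standard stable-range statement), one only needs the map $B_{m,n} \to B_{m,m+1}$, which I would obtain by naturality: realize the suspension on the Postnikov models, observing that the induced maps on the (computable) homotopy groups are themselves computable, then lift through the effective Postnikov towers. The freedom to enlarge or resubdivide $B_{m,m+1}$ if needed makes the lifting unobstructed in the relevant range.

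For part (iii), given a stable-range PL embedding $M^m \hookrightarrow \mathbb{R}^{2m+k}$, the normal bundle is an honest PL $\mathbb{R}^k$-bundle, hence has an underlying $S^{k-1}$-fibration, classified by a map $M \to BG_k$; I would extract this classifying map combinatorially from the embedding. Concretely, a regular neighborhood of $M$ gives a PL disk-bundle whose boundary sphere-bundle structure can be read off the triangulation — e.g. by taking the simplicial map recording, over each simplex of $M$, a trivialization and the transition data, landing in the bar construction for $G_k$ — and then composing with $P \to B_{m,k}$ one lands (after using the stabilization of (ii) to pass to $B_{m,m+k}$, or directly if one builds $B_{m,m+k}$) in the required target. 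Because the embedding is in the stable range, standard PL regular-neighborhood theory guarantees the normal bundle exists and is unique, so the classifying map is well-defined up to homotopy, and its construction from the finite combinatorial data of the embedding is straightforward bookkeeping.

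The main obstacle I expect is part (i): making the passage from ``$BG_n$ has computable homotopy groups'' to ``there is an algorithm producing a finite simplicial set $B_{m,n}$ with an $m$-connected map to $BG_n$ and an explicit model of the pulled-back tautological bundle plus its relative Postnikov tower.'' The subtlety is that we need not just a space with the right homotopy type but a genuine map to $BG_n$ realizing the tautological fibration, so that part (iii)'s classifying map lands somewhere with computable meaning; threading the effective Postnikov-tower constructions through the bar construction while keeping track of the universal fibration — rather than just its abstract homotopy type — is where the real care is needed. Everything else is an application of the effective homotopy theory already invoked in the paper together with routine PL topology.
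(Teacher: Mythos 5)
Your outline diverges from the paper's construction in ways that conceal the two real difficulties, and at those points there are genuine gaps. First, in part (i) you propose taking a bar-construction model of $BG_n$ and feeding it to ``an effective Postnikov-tower construction'' to extract a finite simplicial set with an $m$-connected map. But the bar construction on the monoid of simplicial self-equivalences of a finite model of $S^{n-1}$ is not a finite simplicial set, and the effective-homotopy machinery you invoke requires its input to be finite (or at least equipped with effective homology), which you have not supplied. You also assert that the homotopy groups of $G_n$ are ``computable because they are expressible via homotopy groups of spheres''; the long exact sequence of $\Omega^{n-1}S^{n-1} \to G_n \to S^{n-1}$ only reduces the problem to computing the boundary map (the Whitehead product $[f,\id_{S^{n-1}}]$) and then solving an extension problem, and in the cases where $\pi_k(S^{n-1})$ or $\pi_{n+k-1}(S^{n-1})$ is infinite the naive ``compute all relations'' step is not a finite computation --- one needs ad hoc arguments (e.g.\ that $[\id,\id]$ has Hopf invariant two, so the relevant cokernel is finite) to make it terminate. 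The paper avoids your bar-construction issue entirely by building $B_{m,n}$ and the total space $E_{m,n}$ by hand, cell by cell, out of explicit blocks $E_f$ for representatives $f:S^k\times S^{n-1}\to S^{n-1}$ of the computed generators of $\pi_k(G_n)$; this simultaneously produces the pulled-back tautological fibration as an explicit map, which is what parts (ii) and (iii) need.

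Second, for (ii) and (iii) you are missing the finiteness observation that makes classifying maps computable at all: the homotopy groups of $BG$ (hence of $B_{m,m+1}$ through dimension $m$) are finite, so there are only finitely many homotopy classes of maps from an $m$-dimensional base into $B_{m,m+1}$, and one can enumerate them and test, stage by stage on the relative Postnikov tower, which one pulls back the universal fibration to the given one. Your alternative for (ii) --- ``realize the suspension on the Postnikov models and lift; the freedom to resubdivide makes the lifting unobstructed'' --- does not identify a terminating procedure, and resubdividing the target does not remove obstructions. For (iii), reading off ``a trivialization and the transition data over each simplex'' of a PL normal disk bundle is not routine bookkeeping: there is no evident algorithm for producing local trivializations of a PL bundle from a triangulation. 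The paper sidesteps this by constructing only the Spivak normal fibration --- a regular neighborhood $N(M)$ together with a strong deformation retraction to $M$, found by exhaustive search since both properties are algorithmically checkable --- and then applying the finite-enumeration subroutine above to classify the resulting map $\partial N \to M$. You would need to either supply the missing trivialization algorithm or adopt something like the Spivak-fibration route.
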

We note that the model we compute is extremely inexplicit in how it classifies
fibrations.
\begin{proof}
  We start by outlining the algorithm for (i).  We will use the algorithms
  outlined in Proposition \ref{off-the-shelf} mostly without comment as building
  blocks.

  We first show that we can compute $\pi_k(G_n)$, given $n$ and $k$.  From the
  fiber sequence
  \[\Omega^{n-1}_{\pm \id}S^{n-1} \xrightarrow{i} G_n \xrightarrow{j} S^{n-1},\]
  we obtain the homotopy exact sequence
  \[\pi_{k+1}(S^{n-1}) \xrightarrow{\phi_{k+1}} \pi_{n+k-1}(S^{n-1})
  \xrightarrow{i_*} \pi_k(G_n) \xrightarrow{j_*} \pi_k(S^{n-1})
  \xrightarrow{\phi_k} \pi_{n+k-2}(S^{n-1}).\]
  Thus it is enough to perform the following steps:
  \begin{enumerate}
  \item Compute representatives of generators for $\pi_k(S^{n-1})$ and
    $\pi_{k+1}(S^{n-1})$.
  \item Compute the obstruction theoretic map
    $\phi_k:\pi_k(S^{n-1}) \to \pi_{n+k-2}(S^{n-1})$.  Given a map
    $f:S^k \to S^{n-1}$, $\phi_k(f)$ is the obstruction to lifting $f$ to a map
    $\tilde f:S^k \to \Omega_{\id}^{n-1}S^{n-1}$ which sends the base point to the
    identity map.  In other words, it is the obstruction to extending the map
    \[f \vee \id:S^k \vee S^{n-1} \to S^{n-1}\]
    to $S^k \times S^{n-1}$.  This obstruction is the \emph{Whitehead product}
    $[f,\id_{S^{n-1}}]$: the composition
    \[S^{n+k-2} \to S^k \vee S^{n-1} \xrightarrow{f \vee \id} S^{n-1}\]
    where the first map is homotopic to the attaching map of the top cell of
    $S^k \times S^{n-1}$.  From this map, we compute its homotopy class as an
    element of $\pi_{n+k-2}(S^{n-1})$; doing this for a representative of each
    generator gives a finite description of the map $\phi_k$.
  \item Now $\pi_k(G_n)$ is generated by lifts of $\ker\phi_k$ and the image of
    $\pi_{n+k-1}(S^{n-1})$ under $i_*$.  We compute, via exhaustive search, a
    homotopy lift of each generator of $\ker\phi_k$ to a map
    $S^k \times S^{n-1} \to S^{n-1}$; the generators of $\pi_{n+k-1}(S^{n-1})$
    determine maps $S^k \times S^{n-1} \to S^{n-1}$ by precomposing with the map
    collapsing $S^k \vee S^{n-1}$.  We then determine the isomorphism type of
    $\pi_k(G_n)$ by computing all relations.

    When both groups are finite, this is a finite computation.  This leaves the
    following cases:
    \begin{description}
    \item[$k=n-2$, $n$ odd] Then
      \[\pi_{n+k-1}(S^{n-1})=\pi_{2n-3}(S^{n-1}) \cong \mathbb{Z} \oplus A_{n-1},\]
      where $A_{n-1}$ is a finite group, and the restriction of the map
      \[\phi_{k+1}:\pi_{k+1}(S^{n-1}) \to \pi_{n+k-1}(S^{n-1})\]
      to the $\mathbb Z$ factor is either surjective or has cokernel
      $\mathbb{Z}/2\mathbb{Z}$, depending on the resolution of the Hopf
      invariant one problem in that dimension, since $[\id,\id]$ always has Hopf
      invariant two.  Thus $\img i_*$ is finite; this can be hardcoded into the
      computation.
    \item[$k=n-1$, $n$ even] In this case $\ker\phi_k \cong \mathbb{Z}$, whereas
      $\pi_{n+k-1}(S^{n-1})$ is finite.  Therefore
      \[\pi_k(G_n) \cong \mathbb{Z} \oplus \img(i_*),\]
      and we only need to compute the relations within the image of $i_*$.
    \item[$k=2n-3$, $n$ odd] In this case $\ker\phi_k$ contains a $\mathbb{Z}$
      factor and a finite factor $A$, and $\pi_{k+n-1}(S^{n-1})$ is again finite.
      This makes this case similar to the previous one:
      \[\pi_k(G_n) \cong \mathbb{Z} \oplus B,\]
      where $B$ is an extension of $A$ by $\pi_{k+n-1}$, a finite group all of
      whose relations can be computed.
    \end{description}
    In addition, the case $k=0$ must be coded separately: $G_n$ always has two
    components.
  \end{enumerate}

  Now we use the fact that $\pi_k(G_n)=\pi_{k+1}(BG_n)$ to build successive
  approximations $B_{i,n}$ of $BG_n$, together with the pullbacks
  $p_i:E_{i,n} \to B_{i,n}$ of the tautological $S^{n-1}$-fibration.

  Given a map $f:S^k \times S^{n-1} \to S^{n-1}$ representing an element
  $\alpha \in \pi_k(G_n)$ (in particular with $f|_{* \times S^{n-1}}=\id$) we define
  the space
  \[E_f=D^{k+1} \times S^{n-1}/\{(x,y) \sim (*,f(x,y)) : x \in \partial D^{k+1},
  y \in S^{n-1}\};\]
  then the projection $E_f \to S^{k+1}$ onto the first factor has homotopy fiber
  $S^{n-1}$, and $\alpha$ is the obstruction to constructing a fiberwise homotopy
  equivalence $S^{n-1} \times S^{k+1} \to E_f$.  We use the $E_f$ for
  representatives of a generating set for $\pi_k(G^n)$ as building blocks for
  our construction.

  We set $p_1:E_{1,n} \to B_{1,n}$ to be the map
  \[\bigvee_{[f]\text{ generating }\pi_0(G_n)} E_f \to \bigvee S^1.\]
  Now suppose we have constructed $p_i:E_{i,n} \to B_{i,n}$ which is the homotopy
  pullback of the tautological bundle over $BG_n$ along an $i$-connected map.
  Then we construct the spaces $E_{i+1,n}$ and $B_{i+1,n}$ and the map $p_{i+1}$
  using the following algorithm.  Here the CW structure can be given via
  simplicial maps from subdivided simplices corresponding to each cell.
  \begin{enumerate}
  \item First, we compute the kernel of the map $\pi_i(B_{i,n}) \to \pi_i(BG_n)$;
    since $E_{i,n}$ is the pullback of the tautological bundle, this means
    determining which elements of $\pi_i(B_{i,n})$ pull $E_{i,n}$ back to a
    trivial fibration over $S^i$.

    To do this, we first evaluate, for each generator of $\pi_i(B_{i,n})$, the
    obstruction in $\pi_{i-1}(S^{n-1})$ to lifting it to $E_{i,n}$.  This allows us
    to compute the kernel $K$ of this obstruction, as well as a generating set
    for this kernel and explicit lifts $h_j:S^{n-1} \to E_{i,n}$ of the generators
    of $K$.

    The obstruction to extending $h_j$ to a fiberwise homotopy equivalence
    $S^i \times S^{n-1} \to E_{i,n}$ depends on the choice of lift; as above, it
    is the Whitehead product $[h_j,\iota]$ where $\iota:S^{n-1} \to E_{i,n}$ is
    the inclusion of a fiber.  For each $h_j$
    and each $g:S^i \to S^{n-1}$ representing a generator of $\pi_i(S^{n-1})$, we
    compute the obstruction in $\pi_{n+i-2}(S^{n-1})$ to completing the diagram
    \[\xymatrixcolsep{4pc}\xymatrix{
      S^i \vee S^{n-1} \ar[r]^-{(h_j+\iota g) \vee \iota} \ar@{^(->}[d] & E_{i,n} \ar@{->>}[d]^{p_i} \\
      S^i \times S^{n-1} \ar[r]^-{\overline{h_j}} \ar@{-->}[ru] & B_{i,n},
    }\]
    where $\overline{h_j}(x,y)=p_i \circ h_j(y)$.  These obstructions generate a
    unique homomorphism
    \[K \times \pi_i(S^{n-1}) \to \pi_{n+i-2}(S^{n-1})\]
    which describes the obstruction to extending any lift of any element of $K$.
    This allows us to compute the kernel of this obstruction as a subgroup of
    $K \times \pi_i(S^{n-1})$.  Its projection to $K$ is the subgroup of
    $\pi_i(B_{i,n})$ we desire.

    Note that both of these obstruction-theoretic calculations only depend on
    one stage of the Postnikov tower of $E_{i,n} \to B_{i,n}$ and therefore fit
    into the computational framework of \cite{CKV} described in Proposition
    \ref{off-the-shelf}\eqref{lifts}.
  \item Now given a generating set for this kernel, we glue in an $(i+1)$-cell
    for each generator to $B_{i,n}$ and a corresponding copy of
    $D^{i+1} \times S^{n-1}$ to $E_{i,n}$.  This ensures that the map
    $\pi_i(B_{i+1,n}) \to \pi_i(BG_n)$ is an isomorphism.
  \item Finally, we wedge on $E_f \to S^{i+1}$ for a set of functions $f$ which
    generate $\pi_i(G_n)$.  This ensures that $B_{i+1,n} \to BG_n$ is an
    $(i+1)$-connected map.
  \end{enumerate}
  Finally, we construct the relative Postnikov tower of the map
  $p_m:E_{m,n} \to B_{m,n}$.  This completes the proof of (i).

  For both (ii) and (iii), we will need a subroutine which, given a map
  $f:E \to B$ whose homotopy fiber is $S^{n-1}$ and such that $B$ is
  $m$-dimensional, computes the classifying map to $B_{m,m+1}$.  We note that
  since the homotopy groups of $BG$ are finite, so are the homotopy groups of
  $B_{m,m+1}$ through dimension $m$.  Therefore there are a finite number of
  homotopy classes of maps $B \to B_{m,m+1}$, which can be enumerated via
  obstruction theory; we choose the one for which $f$ is the homotopy pullback
  of $p_m$, which can be verified by induction on the relative Postnikov tower.

  For (ii), we can construct the map $f$ by repeatedly taking the double mapping
  cone of $p_m$.

  For (iii), given a PL embedding of $M$, we need to compute the Spivak normal
  $S^{m+k-1}$-fibration $E \to M$ before we compute its classifying map.  It is
  enough to find the following data:
  \begin{itemize}
  \item a compact PL $(2m+k)$-manifold with boundary $N(M)$ embedded in
    $\mathbb{R}^{2m+k}$ which contains a subdivision of $M$ in its interior;
  \item a strong deformation retraction of $N(M)$ to $M$.
  \end{itemize}
  Then the induced map $\partial N \to M$ is the Spivak normal fibration.  Since
  these properties are checkable and we can iterate through all subdivisions of
  $M$, simplicial complexes in $\mathbb{R}^{2m+k}$ with rational vertices, and
  simplicial maps from a subdivision of $N(M) \times I$ to $M$, we can find this
  data via exhaustive search.
\end{proof}

\section{Immersibility}

\begin{thm} \label{decundec}
  Let $n \geq 4$ and $m<n$ be natural numbers.
  \begin{enumerate}[(i)]
  \item Whenever $n-m$ is odd or $3m \leq 2n-1$, the immersibility of a smooth
    $m$-manifold with boundary (given as a semialgebraic set in some
    $\mathbb{R}^N$) in $\mathbb{R}^n$ is algorithmically decidable.
  \item \label{undec}
    Whenever $n-m$ is even and $5m \geq 4n$, the immersibility of a smooth
    $m$-manifold in $\mathbb{R}^n$ is undecidable (including if only closed
    manifolds are considered.)
  \item Whenever $n-m \neq 2$, the immersibility of a PL $m$-manifold with
    boundary in $\mathbb{R}^n$ is decidable.
  \item When $n-m=2$, it is undecidable (at least for $n \geq 10$) whether a PL
    $m$-manifold has a locally flat immersion in $\mathbb{R}^n$, but there is an
    algorithm to decide whether it has a not necessarily locally flat immersion.
  \end{enumerate}
  Moreover, over the cases for which an algorithm exists, it can be made uniform
  with respect to $m$ and $n$.
\end{thm}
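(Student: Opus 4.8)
The plan is to turn each case into a homotopy-lifting problem and then to settle, positively or negatively, the algorithmic status of that problem. \emph{The common reduction.} By Hirsch--Smale \cite{Hirsch}, a smooth $m$-manifold $M$ immerses in $\mathbb{R}^n$ iff its stable normal bundle, viewed as a map $\nu_M\colon M\to BO$, lifts through $BO_k\to BO$ with $k=n-m$; equivalently $M$ carries a rank-$k$ vector bundle stably inverse to $TM$. The $C^1$-triangulation yields $\nu_M$ as an explicit map to a Grassmannian (a finite model of $BO$ through dimension $m$), and the relative Postnikov tower of $BO_k\to BO$ through dimension $m$ is computable since $BO_k,BO$ are explicit spaces with computable homotopy and cohomology; the fibre $O/O_k$ is simply connected for $k\ge 2$, so $\pi_1 M$ is harmless. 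In the PL category Haefliger--Po\'enaru \cite{HaeP} gives the analogue with $BPL_k$ for $k\ge 3$ ($k=1$ done by hand); since $BPL_k$ has no computable finite model, one instead computes the Spivak fibration $\sigma_M\colon M\to BG_k$, using Lemma~\ref{lem:BGn}(iii) after locating a stable-range PL embedding of $M$ by exhaustive search.

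\emph{The smooth dichotomy, (i) versus (ii).} Lifting $\nu_M$ against $BO_k\to BO$ is an obstruction problem with groups $H^{i+1}(M;\pi_i(O/O_k))$, decidable by \cite{CKV} \cite{FiVo} \cite{Vok} unless some stage with free-abelian coefficients presents a genuinely quadratic obstruction --- the one phenomenon, by \cite{CKMVW}, that escapes those algorithms. Rationally $BSO_k\to BSO$ is an isomorphism except that it sends $p_{k/2}$ to $e^2$ when $k$ is even and kills $p_j$ for $2j>k$, so the only possible quadratic obstruction is $e^2=p_{k/2}(\nu_M)$ for $e\in H^k(M;\mathbb{Z})$, which requires $n-m$ even and $H^{2k}(M)\neq 0$, i.e.\ $3m\ge 2n$. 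Hence (i): when $n-m$ is odd there is no Euler class, and when $3m\le 2n-1$ one has $2k>m$, so $H^{2k}(M)=0$ and the constraint is vacuous --- either way the problem is decidable (the subcase $k=1$, $m=n-1$, is even easier, with $\mathbb{Z}/2$ coefficients throughout). The range $2n/3\le m<4n/5$ with $n-m$ even is left open precisely because the quadratic constraint is present but I do not see how to realize arbitrary instances of it there.

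\emph{The undecidable case (ii).} When $n-m$ is even and $5m\ge 4n$, i.e.\ $m\ge 4k$, I would reduce Hilbert's tenth problem: convert a given diophantine system to an equivalent quadratic one, then build algorithmically a smooth closed $m$-manifold $M$ with $H^k(M;\mathbb{Z})$ free of the appropriate rank, prescribed cup products landing in $H^{2k}(M;\mathbb{Z})$, $p_j(M)=0$ for $2j>k$, and $p_{k/2}(M)$ equal to the target class, so that $M$ immerses in $\mathbb{R}^n$ exactly when the system is solvable; a decision procedure for immersibility would then decide the system. The bound $m\ge 4k$ is what lets the degree-$2k$ classes be Poincar\'e dual to degree-$(m-2k)$ classes in complementary position, so that the ring structure and $p_{k/2}$ can be prescribed independently, by surgery on connected sums of sphere products. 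Making this realization exact --- controlling torsion and the \emph{integral}, not merely rational, obstruction so that the homotopical lifting problem is on the nose the quadratic system --- is the step needing the most care here, though it parallels \cite{CKMVW}.

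\emph{The PL cases (iii), (iv) and uniformity --- the main obstacle.} For $k\ge 3$ I would route the Haefliger--Po\'enaru lift through $BG_k$: the stable PL normal microbundle is detected by $M\to BPL$, its underlying fibration $\sigma_M\colon M\to BG_k$ is computable, and a codimension-$k$ reduction is a lift of $\sigma_M$ through $BPL_k\to BG_k$ compatible with the PL structure. The needed inputs are classical: $PL/O$ is finite, $G/PL$ has completely known homotopy type (Sullivan), and in codimension $\ge 3$ normal block-bundle theory is stable, so the fibration one lifts against over $BG_k$ is pulled back --- along the computable stabilization $BG_k\to BG$ of Lemma~\ref{lem:BGn}(ii) --- from a computable fibration over $BG$; moreover over $BG_k$ the Euler class and the $L$-classes are \emph{independent}, so no relation $e^2=p_{k/2}$ and hence no quadratic obstruction arises, which is why PL immersibility is decidable for \emph{all} $m$. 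Codimension two splits: a locally flat PL immersion has structure monoid $PL_2\simeq O_2$, so locally flat PL immersibility coincides with smooth immersibility and is undecidable once $n\ge 10$ by the $k=2$ instance of (ii); a not necessarily locally flat PL immersion imposes no constraint on local models, so its existence reduces to a lifting problem computable from $BG_2$ by Lemma~\ref{lem:BGn} and is decidable. Uniformity is then immediate: compute which regime $(m,n)$ lies in --- a finite arithmetic step --- and run the corresponding subroutine, each itself uniform in $(m,n)$. The genuine obstacle throughout is the opacity of $BPL_k$; everything depends on the $BG_k$ detour of Lemma~\ref{lem:BGn} together with the classical surgery computations being enough to pin down the relevant Postnikov data.
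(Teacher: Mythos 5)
Your overall architecture matches the paper's: reduce to a stable lifting problem via Hirsch--Smale and Haefliger--Po\'enaru, isolate the relation $e^2=p_{k/2}$ as the unique source of undecidability, reduce Hilbert's tenth problem for part (ii), and detour through $BG_k$ (via the homotopy pullback square over $BG$ in codimension $\geq 3$) to avoid modelling $BPL_k$. But there are two genuine gaps. First, in part (ii) the entire weight of the undecidability proof rests on the realization step you defer: producing, for each quadratic diophantine system, a closed smooth $m$-manifold whose lifting problem is \emph{exactly} that system. ``Surgery on connected sums of sphere products'' does not obviously let you prescribe the degree-$c$ cup-product form, kill the higher Pontryagin classes, and set $p_{c/2}$ independently, and you admit as much. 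The paper closes this with a specific device: build the $2c$-complex $X$ (wedge of $c$-spheres with $2c$-cells attached along integer combinations of Whitehead products) together with a map $X\to BSO$, invoke Wall's theorem on $(4c+1)$-dimensional thickenings to get a compact manifold with $2c$-connected boundary inclusion realizing this tangential data, and take $\partial M$ as the closed $4c$-manifold; the algorithm then merely searches for such a manifold, with termination guaranteed by Wall. Without Wall's theorem or an equally concrete substitute, part (ii) is not proved.

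Second, your uniform $BG_k$ route breaks down for PL immersions in codimension $1$ (and your ``$k=1$ done by hand'' does not repair it). The pullback square relating $BPL_k$, $BPL$, $BG_k$, $BG$ holds only for $k\geq 3$; for $k=1$ one must decide whether the classifying map $M\to\widetilde{BPL}$ itself is nullhomotopic, and triviality of the induced map to $\widetilde{BG}$ is strictly weaker --- the discrepancy is measured by $G/PL$, i.e.\ by the integral Pontryagin and Kervaire classes of the PL manifold, which are not known to be computable from a triangulation (only the rational Pontryagin classes are, by Gelfand--MacPherson). The paper circumvents this with smoothing theory: since $PL/O$ has finite homotopy groups, it algorithmically enumerates all smoothings of $M$ (a nontrivial construction occupying its \S\ref{S:smooth}, requiring computation of the groups $\Theta_k$ of exotic spheres via the Kervaire--Milnor exact sequence) and runs the smooth codimension-one algorithm on each. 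A similar smoothing-theoretic step underlies the locally flat codimension-two case. You should also say a word about non-orientable manifolds, which the theorem does not exclude; the paper handles them by running the whole argument $\mathbb{Z}/2\mathbb{Z}$-equivariantly on the oriented double cover.
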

Note that for certain pairs with $n-m$ even, we have not determined whether
immersibility is decidable.  We suspect that it is in fact undecidable in those
cases, since the corresponding homotopy-theoretic problem is undecidable.
\begin{proof}
  We assume at first that the manifold is oriented, to avoid fundamental group
  issues.

  In each case, the problem of immersibility can be reduced to a homotopy
  lifting problem: these are the $h$-principles of Smale--Hirsch \cite{Hirsch}
  in the smooth case and Haefliger--Poenaru \cite{HaeP} in the PL case.  Both of
  these results state that the space of immersions $M \to N$ in the appropriate
  category is homotopy equivalent to the space of tangent bundle monomorphisms
  $TM \to TN$, or simply $TM \to \mathbb{R}^n$ when $N=\mathbb{R}^n$.  These in
  turn can be thought of as lifts of the classifying map of the tangent bundle
  to the Grassmannian of $m$-planes in $\mathbb{R}^n$.

  \subsubsection*{The smooth case}
  The previous paragraph reduces immersibility to the homotopy lifting problem
  \[\xymatrix{
    & \Gr_m(\mathbb{R}^n) \ar[d] \\
    M \ar[r]^-\kappa \ar@{-->}[ru] & BSO_m
  }\]
  where $\kappa$ is the classifying map of the tangent bundle of $M$.  Moreover,
  this lifting property is \emph{stable}: such a lift exists if and only if the
  corresponding lift
  \[\xymatrix{
    & \Gr_{m}(\mathbb{R}^n) \ar[r] \ar[d] & \Gr_{m+1}(\mathbb{R}^{n+1}) \ar[d] \\
    M \ar[r]_-\kappa \ar@{-->}[rru] & BSO_m \ar[r] & BSO_{m+1} 
  }\]
  exists.  This is because the map between the corresponding homotopy fibers
  \[V_m(\mathbb{R}^n) \to V_{m+1}(\mathbb{R}^{n+1})\]
  is $(n-1)$-connected.  Therefore, when $m<n$, it suffices to resolve the
  lifting problem
  \begin{equation} \label{eqn:BSO}
    \begin{gathered}
    \xymatrix{
    & BSO_{n-m} \ar[d] \\
    M \ar[r]^-\kappa \ar@{-->}[ru] & BSO
    }
    \end{gathered}
  \end{equation}
  which arises from the limit of this sequence.

  The spaces $BSO_{n-m}$ and $BSO$ each have the rational homotopy type of a
  product of Eilenberg--MacLane spaces \cite[Prop.~15.15]{FHT}.  In particular,
  the rational homotopy groups are dual to a subset of the rational cohomology
  algebra.  Moreover, a map to either of these spaces is determined up to a
  finite set by the pullbacks of cohomology generators.  Specifically,
  $H^*(BSO)$ is a free algebra generated by the Pontryagin classes in degree
  $4k$ for every $k$.  When $n-m$ is odd, $H^*(BSO_{n-m})$ is generated by
  Pontryagin classes in degree $4k$ where $2k<n-m$; when $n-m$ is even, it is
  generated by these plus an Euler class in degree $n-m$ whose square is the top
  Pontryagin class.

  \subsubsection*{The smooth case in odd codimension}
  In the case when $n-m$ is odd, whether a lift exists in \eqref{eqn:BSO}, and
  therefore the immersibility of $M$ in $\mathbb{R}^n$, can be determined via
  the following algorithm.  Let $M$ be given to us by a $C^1$ triangulation
  immersed in $\mathbb R^N$; without loss of generality we can take
  $N \geq 2m+1$.  We can construct the classifying map
  $f:M \to \Gr_m(\mathbb R^N)$ as described in \S\ref{S:Grass}.

  Now we determine whether there is a lift of $f$, using the special properties
  of the relative Postnikov tower of the map $p:BSO_{n-m} \to BSO$.  After
  rationalization, $p$ becomes the inclusion
  \[\prod_{k=1}^{2(n-m)} K(\mathbb Q,4k) \to \prod_{k=1}^\infty K(\mathbb Q,4k).\]
  This means that a lift of $f$ exists if and only if the $k$th Pontryagin
  classes of $M$ are zero, for $2(n-m) < 4k \leq m$, and some finite
  obstructions can be resolved.  Moreover, this lift is rationally unique.

  We start by computing the Pontryagin classes of $M$.  If there is a nonzero
  class above dimension $2(n-m)$, then a lift does not exist.  Otherwise, we
  must compute representatives of the finitely many homotopy classes of maps
  $g:M \to BSO_{n-m}$ which have the same Pontryagin classes.  Then, for each of
  these representatives, we must test whether $p \circ g$ is homotopic to the
  classifying map of the tangent bundle of $M$.

  As approximations to $BSO_{n-m}$ and $BSO$, we use $\Gr_{m+N'}(\mathbb R^{n+N'})$
  and $\Gr_{m+N'}(\mathbb R^{N+N'})$, for $N'$ sufficiently large.  As explained
  in \S\ref{S:Grass}, we can construct the stabilization maps and therefore have
  a map $M \to \Gr_{m+N'}(\mathbb R^{N+N'})$, which by abuse of notation we also
  call $f$.  The map $p$ can be constructed similarly.

  It remains to explain in detail how to construct the finite set of
  representatives.  We first compute the Postnikov tower of the space
  $\Gr_{m+N'}(\mathbb R^{n+N'})$ up to dimension $m$, obtaining Postnikov stages
  and maps $P_k \xrightarrow{p_k} P_{k-1}$.  Note that a map $M \to P_{4k}$
  contains information about Pontryagin classes up to the $k$th.  At the end of
  the induction, we will have maps $g:M \to P_m$; homotopy classes of such maps
  are in bijection with $[M,BSO_{n-m}]$.

  Suppose now that we have computed the set $\Sigma_{k-1}$ of candidate maps
  $M \to P_{k-1}$.  For each map in $\Sigma_{k-1}$, we first decide whether it
  lifts to $P_k$ and if it does, compute a lift $f_k:M \to P_k$.  Now if $k$ is
  not a multiple of $4$ or $k>2(n-m)$, then the obstruction group
  $H^k(M;\pi_k(\Gr_{m+N'}(\mathbb R^{N+N'})))$ which determines the set of lifts
  is finite, and we can compute a representative of each homotopy class of
  lifts.

  If $k$ is a multiple of $4$ and $k \leq 2(n-m)$, then there are an infinite
  number of lifts, and we must restrict to those which have the desired
  $(k/4)$th Pontryagin class.  Since $BSO_{n-m}$ is rationally a product,
  \[H^k(P_k;\mathbb Q) \cong H^k(\Gr_{m+N'}(\mathbb R^{N+N'})),\]
  and by computing the induced map on cohomology we can find the Pontryagin
  class $\eta \in H^k(P_k;\mathbb Q)$.  Now we state a lemma:
  \begin{lem}
    There is a homomorphism $\ph:H^k(M;\pi) \to H^k(M;\mathbb Q)$ such that if
    $f_k,f_k':M \to P_k$ are lifts of a map $f_{k-1}:M \to P_{k-1}$ such that the
    obstruction to homotoping them over $f_{k-1}$ is $\omega \in H^k(M;\pi)$,
    then
    \[(f_k')^*\eta=f_k^*\eta+\ph(\omega).\]
  \end{lem}
  Assuming the lemma, we can compute the values of the homomorphism $\ph$ on the
  generators of $H^k(M;\pi)$ by computing $f_k^*\eta$ as well as the pullback
  along those maps $f_k'$ for which the obstruction to homotoping $f_k$ to
  $f_k'$ is one of the generators of $H^k(M;\pi)$.  From this, we can now
  compute the finite set of elements of $H^k(M;\pi)$ such that the corresponding
  lifts of $f_{k-1}$ have the desired Pontryagin class, and the corresponding
  finite set of lifts.  This completes the inductive step.
  \begin{proof}[Proof of the lemma.]
    The homomorphism $\ph$ is induced by a map $\pi \to \mathbb Q$, which in
    turn is induced by the composition
    \[K(\pi,n) \xrightarrow{i} P_k \xrightarrow{\eta} K(\mathbb Q,n),\]
    where $i$ is the inclusion of the fiber of the fibration $P_k \to P_{k-1}$
    and $\eta$ is the classifying map of the Pontryagin class.  From this the
    lemma can be proved by explicit computation.  Suppose we have a
    triangulation of $M$ and two lifts $f_k$ and $f_k'$ of $f_{k-1}$ which
    coincide on $M^{k-1}$; this can be achieved by a homotopy without loss of
    generality.  Then one has that
    \[(f_k')^*\eta=f_k^*\eta+[(\eta \circ i)_*w]\]
    where $w \in C^k(M;\pi)$ is the obstruction cochain.
  \end{proof}

  \subsubsection*{The smooth case in the metastable range}
  When $2n \geq 3m+1$, all Pontryagin classes in the relevant range are zero.
  However, when $n-m$ is even, there may be a nonzero Euler class in degree
  $n-m$, whose square is always zero.  This is the only infinite-order homotopy
  group of the fiber of the map $BSO_{n-m} \to BSO$ below dimension $n$.
  Moreover, this map is $(n-m)$-connected.  To show that the resulting lifting
  problem is decidable, we can use the results of Vok\v r\'inek \cite{Vok}, who
  shows that a lifting problem through a $k$-connected fiber is decidable if the
  only infinite-dimensional homotopy groups of this fiber are of dimensions
  $<2k$.  As in the odd-dimensional case, we can approximate the map
  $BSO_{n-m} \to BSO$ by maps between finite-dimensional Grassmannians.

  \subsubsection*{The smooth case in even codimension}
  Now suppose that $n-m$ is even; write $c=n-m$.  We will show that
  immersibility is undecidable in this situation if $m \geq 4c$.  In order to do
  this, we first show that the lifting problem
  \[\xymatrix{
    & BSO_c \ar[d] \\
    X \ar[r]^-f \ar@{-->}[ru] & BSO
  }\]
  is undecidable for general $2c$-complexes $X$ and maps $f:X \to BSO$.
  We prove this by a method used in \cite{CKMVW}, reducing an algebraic problem
  which is undecidable by \cite[Lemma 2.1]{CKMVW} to a question about lifts.
  The undecidable problem in question is a special case of Hilbert's 10th
  problem: determining the existence of an integer solution to a system of
  equations each of the form
  \begin{equation} \label{sum-of-squares}
    \sum_{1 \leq i<j \leq r} a_{ij}^{(k)} x_ix_j=b_k,
  \end{equation}
  where $x_1,\ldots,x_r$ are variables and $b_k$ and $a_{ij}^{(k)}$ are
  coefficients.

  The idea is to build a CW complex $X$ with a map $f:X \to BSO$ determined by
  the $b_k$, such that any lift of $f$ to $BSO_c$ determines an assignment of
  the variables $x_i$.  This is done by having the $b_k$ determine the
  $2c$-dimensional Pontryagin class of $f$; since in $BSO_c$, this Pontryagin
  class is the square of a $c$-dimensional Euler class, this forces us to find a
  corresponding $c$-dimensional class in $X$ which is the pullback of this Euler
  class under the lift.  The relationships between the pairings of these classes
  with $c$- and $2c$-dimensional homology classes will be given by
  \eqref{sum-of-squares}. The process is complicated by some finite-order
  phenomena; we now give the construction in detail.

  Let $\alpha \in \pi_{2c}(BSO) \cong \mathbb Z$ be a generator; we have
  \[\alpha^*p([S^{2c}])=n_1\]
  where $p \in H^{2c}(BSO)$ is the Pontryagin class in degree $2c$ and $n_1$ is
  some integer.  Denote the pullback of this Pontryagin class by
  $\tilde p \in H^{2c}(BSO_c)$.  Similarly, let $\beta \in \pi_c(BSO_c)$ be an
  element which (if $c$ is a multiple of $4$) pairs trivially with the
  Pontryagin class in degree $c$ and (subject to this restriction) has the
  smallest possible nontrivial pairing with the Euler class
  $\eta \in H^c(BSO_c)$.  Let $n_2$ be an integer such that
  \[\beta^*\eta([S^c])=n_2.\]
  Note that rationally, $\eta^2$ is a multiple of $\tilde p$, so there are
  integers $n_3,n_4 \neq 0$ such that
  \[n_3\eta^2=n_4\tilde p.\]
  Finally, let $n_5=|\pi_{2n-1}(BSO_c)|$, since this homotopy group is of finite
  order.

  Given a system of $s$ equations of the form \eqref{sum-of-squares}, we form a
  CW complex $X$ as follows.  We take the wedge of $r$ copies of $S^c$, which we
  label $S^c_1,\ldots,S^c_r$, and attach $s$ $2c$-cells, the $k$th cell $e_k$ via
  an attaching map whose homotopy class is a linear combination of Whitehead
  products
  \[\sum_{1 \leq i<j \leq r} n_1n_4n_5a_{ij}^{(k)}[\id_i,\id_j],\]
  where $\id_i$ is the inclusion map of $S^c_i$.  We fix a map $f:X \to BSO$ by
  taking the $c$-cells to the basepoint and the $k$th $2c$-cell to a
  representative of $2n_2^2n_3n_5b_k\alpha$.

  Then for any homotopy lift $\tilde f:X \to BSO_c$ of $f$, we have
  \begin{equation} \label{whitehead}
    n_4\tilde f^*\tilde p(e_k)=n_3\tilde f^*\eta^2(e_k)=n_1n_3n_4n_5\sum_{1 \leq i<j \leq r} 2a_{ij}^{(k)}\tilde f^*\eta([S^c_i])\tilde f^*\eta([S^c_j]),
  \end{equation}
  where the last equation can be obtained by analyzing, for each $i$ and $j$,
  the maps $X \to S^c_i \times S^c_j$ which send all the other spheres to a
  point.  Moreover, if $c$ is a multiple of $4$, the pullback of the degree $c$
  Pontryagin class along $\tilde f$ is zero.  Therefore, the numbers
  $x_i=n_2^{-1}\tilde f^*\eta([S^c_i])$ are integers, and \eqref{whitehead}
  reduces to
  \[2n_1n_2^2n_3n_4n_5b_k=2n_1n_2^2n_3n_4n_5\sum_{1 \leq i<j \leq r} a_{ij}^{(k)}x_ix_j,\]
  showing that \eqref{sum-of-squares} is satisfied.

  Conversely, if we choose $x_1,\ldots,x_r$ satisfying \eqref{sum-of-squares},
  then the map $\bigvee_{i=1}^r S^c \to BSO_c$ which maps $S^c_i$ via a
  representative of $x_i\beta$ extends to a map $\tilde f:X \to BSO_c$ because
  the attaching map of every $2c$-cell is divisible by $n_5$ and therefore is
  zero in $\pi_{2c-1}(BSO_c)$.  The projection of $\tilde f$ to $BSO$ is then
  nullhomotopic on each $c$-cell and maps each $2c$-cell $e_k$ to $BSO$ via a
  representative of $2n_2^2n_3n_5b_k\alpha$.

  It remains to show that one can construct a manifold whose homotopy type and
  Pontryagin classes determine any such system.  This can be done, at the cost
  of some increase in dimension; our examples are of dimension at least $4c$,
  which is probably not optimal.

  Such manifolds exist by an argument of Wall \cite[\S5]{Wall}, who shows that
  for any $2c$-complex $X$ and map $f:X \to BSO$, and any $q \geq 4c$, there
  is a corresponding $(q+1)$-dimensional \emph{thickening} of $X$, i.e.~a
  manifold with boundary $M$ homotopy equivalent to $X$ such that the
  classifying map of its tangent bundle is homotopic to $f$.  Moreover, the pair
  $(M,\partial M)$ is $(q-2c)$-connected and any extra topology of $\partial M$
  is sent to zero by the classifying map.  Thus $\partial M$ is a closed
  $q$-manifold which immerses in $\mathbb{R}^{q+c}$ if and only if the system of
  equations above has a solution.

  Now suppose there were an algorithm to decide smooth immersibility of
  $q$-manifolds in $\mathbb{R}^{q+c}$ for some fixed even $c$ and $q \geq 4c$.
  Then given a system of equations, we could iterate over smooth closed
  $q$-manifolds $M$ and bases for $H^c(M)$ until we find one with the right
  cohomology algebra and classifying map.  This search terminates since Wall
  guarantees the existence of such a manifold.  Then we could decide whether the
  system has a solution using our solution to the immersibility problem.  Thus
  immersibility cannot be decidable.

  \subsubsection*{The PL case in codimension $\geq 3$}
  Denote the universal cover of a space $X$ by $\widetilde X$.  In the PL case,
  similarly to the smooth case, the unstable lifting problem reduces to the
  stable problem
  \[\xymatrix{
    & \widetilde{BPL}_{n-m} \ar[d] \\
    M \ar[r]^-\kappa \ar@{-->}[ru] & \widetilde{BPL}.
  }\]
  Moreover, when $n-m \geq 3$, the diagram
  \[\xymatrix{
    BPL_{n-m} \ar[r] \ar[d] & BPL \ar[d] \\
    BG_{n-m} \ar[r] & BG,
  }\]
  where $BG$ is the classifying space of spherical fibrations, is a homotopy
  pullback square \cite[p.~123]{WallBk}.  Thus, equivalently, we must solve the
  lifting problem
  \[\xymatrix{
    & \widetilde{BG}_{n-m} \ar[d] \\
    M \ar[r]^-\kappa \ar@{-->}[ru] & \widetilde{BG}.
  }\]
  The argument in \S2 shows that the only infinite homotopy group of
  $G_{n-m}$ is
  \[\left\{\begin{array}{l l}
  \pi_{n-m-1} & \text{when $n-m$ is even} \\
  \pi_{2(n-m)-3} & \text{when $n-m$ is odd,}
  \end{array}\right.\]
  and therefore $BG_{n-m}$ only has a single infinite homotopy group in dimension
  $n-m$ or $2(n-m)-2$, depending on parity.  Moreover, in both cases the map
  $\widetilde{BG}_{n-m} \to \widetilde{BG}$ is $(n-m-2)$-connected, by the
  stability of homotopy groups of spheres.

  Thus to decide immersibility we can use the following algorithm.  Suppose $M$
  is given to us as a simplicial complex.  By abuse of notation we refer to
  $\widetilde{BG}_n$ when we really mean the approximations constructed in
  \S\ref{S:BGn}.
  \begin{enumerate}
  \item We embed the simplicial complex linearly in $\mathbb{R}^N$, for some
    large $N$.
  \item This gives us a map $M \to \widetilde{BG}_N$ which can be computed by
    Lemma \ref{lem:BGn}(\ref{BGniii}).
  \item Decide whether the map lifts to a map $M \to \widetilde{BG}_{n-m}$.  In
    the even case, this can be done by the aforementioned work of Vok\v r\'inek
    \cite{Vok}, since the only infinite obstruction is below twice the
    connectivity of the map $\widetilde{BG}_{n-m} \to \widetilde{BG}$.  In the
    odd-dimensional case, we can split the work into two steps:
    \begin{itemize}
    \item Compute all possible lifts to the $(2(n-m)-3)$rd stage of the relative
      Postnikov tower of $\widetilde{BG}_{n-m} \to \widetilde{BG}$.  This can be
      done since all the obstructions are finite.
    \item For each lift computed, use the algorithm of Vok\v r\'inek to decide
      whether it can be extended to $\widetilde{BG}_{n-m}$.
    \end{itemize}
  \end{enumerate}

  \subsubsection*{PL immersions in codimension 2}
  In codimension 2, there are two somewhat different things we may mean by PL
  immersion: locally flat immersion, in which the link of every vertex is
  unknotted, and immersion which is not necessarily locally flat.

  A PL manifold $M$ has a locally flat immersion in codimension 2 if and only if
  it has a smoothing which immerses smoothly in codimension 2.  This is because
  by the fundamental theorem of smoothing theory \cite[Part II]{HM}, $M$ is
  smoothable if and only if the classifying map $M \to \widetilde{BPL}$ of the
  stable tangent bundle lifts to $BSO$; but immersibility is equivalent to the
  existence of a further lift
  \[\xymatrix{
     & BSO_2 \cong \widetilde{BPL}_2 \ar[d] \\
     & BSO \ar[d] \\
    M \ar[r] \ar@{-->}[ruu] & \widetilde{BPL}.
  }\]
  Moreover, the homotopy fiber $PL/O$ has finite homotopy groups, so the
  rational obstructions discussed above are the same in the PL case as in the
  smooth case.  Therefore, this problem is undecidable for $\dim M \geq 8$ by
  the same argument as above: the examples we produced are PL immersible if and
  only if they are smoothly immersible.

  The case of immersions which are not necessarily locally flat was studied by
  Cappell and Shaneson \cite{CaSh76,CaSh73}.  Such immersions are
  classified by maps to a space $BSRN_2$.  Unlike in the higher codimension
  case, the diagram
  \[\xymatrix{
    BSRN_2 \ar[r] \ar[d] & \widetilde{BG_2} \ar[d] \\
    \widetilde{BPL} \ar[r] & \widetilde{BG}
  }\]
  is not a homotopy pullback square, but the map from $BSRN_2$ to the pullback
  splits up to homotopy.  Therefore it is again sufficient to solve the lifting
  problem from $\widetilde{BG}$ to $\widetilde{BG_2}$.

  \subsubsection*{Codimension 1}
  In codimension one, the lifting problem above, and therefore the question of
  smooth immersibility, boils down to whether the suspension of the tangent
  bundle is trivial, that is, whether the composition
  \[M \to BSO_m \to BSO_{m+1}\]
  is nullhomotopic.  Once this composition is given as an explicit map, whether
  it is nullhomotopic is a decidable question by Proposition
  \ref{off-the-shelf}\eqref{homotopic} (due to \cite{FiVo}).

  The oriented PL case is formally identical: one needs to determine whether the
  map $M \to \widetilde{BPL}_{m+1}$ induced by the tangent bundle, or
  equivalently the map $M \to \widetilde{BPL}$, is trivial.  However, up until
  now we have gotten away with only studying maps to $\widetilde{BG}$, and we
  have neither an explicit finite-type model for $\widetilde{BPL}$ nor a way of
  constructing the map.  One way of getting around this would be to first
  determine whether the map to $\widetilde{BG}$ is trivial; if it is, then there
  is an induced map to $G/PL$ which must also be trivial.  To determine its
  triviality, we would need to compute the Pontryagin and Kervaire classes of
  $M$ from its combinatorial structure.  While it is known in principle that
  local combinatorial formulas can be used to compute these classes \cite{LR},
  no explicitly computable such formulas are known, except for the first
  Pontryagin class computed by Gaifullin; see the survey article \cite{Gai}.
  For example, the well-known construction of rational Pontryagin classes by
  Gelfand and MacPherson \cite{GeMP} uses either a smooth structure or an
  additional piece of data replacing it.

  The path we take uses smoothing theory.  As in the codimension 2 case, if the
  classifying map $M \to \widetilde{BPL}$ is trivial, $M$ admits a smoothing
  (that is, a smooth structure which is compatible with the PL structure) which
  immerses smoothly in $\mathbb{R}^{m+1}$.  Thus it is enough to construct all
  possible smoothings of $M$ (finitely many, and perhaps none); then we can use
  the smooth algorithm to determine whether one of them immerses.  This
  construction is given in \S\ref{S:smoothings}.

  \subsubsection*{Non-orientable manifolds}
  In this case constructing an immersion is equivalent to constructing a
  $\mathbb{Z}/2\mathbb{Z}$-invariant immersion of the oriented double cover.
  In other words, we must do what we did above but in a way that respects the
  natural free $\mathbb{Z}/2\mathbb{Z}$-action on each of the classifying
  spaces.  This action is easy to encode computationally; moreover, as pointed
  out by Vok\v r\'inek \cite[\S5]{Vok} and elaborated in \cite{CKV}, the
  relevant homotopy theory can be done as easily as in the non-equivariant case.
\end{proof}

\section{Applications to embeddings}

\subsection{Immersions which extend to embeddings} \label{S:closed}

The following is a well-known fact, noted for example in \cite{Massey}.
\begin{lem}
  The normal bundle to an embedded smooth closed oriented submanifold
  $M^m \subseteq \mathbb{R}^n$ always has vanishing Euler class.
\end{lem}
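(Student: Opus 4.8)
The plan is to exploit the self-intersection interpretation of the Euler class: for a tubular neighborhood $\nu \cong N(M)$ of $M^m$ inside $\mathbb{R}^n$, the Euler class $e(\nu) \in H^{n-m}(M)$ is Poincar\'e dual to the self-intersection of the zero section, i.e.\ to the intersection of $M$ with a small pushoff $M'$ obtained by translating $M$ along a generic section of the normal bundle. Concretely, I would first choose a generic normal vector field, use it to produce a perturbed copy $M' \subseteq \mathbb{R}^n$ transverse to $M$, and observe that $[M \cap M'] \in H_{2m-n}(M)$ is Poincar\'e dual to $e(\nu)$. (When $2m-n < 0$ one can make $M$ and $M'$ disjoint and the statement is immediate, so assume $2m \geq n$.)

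The key step is then to show this intersection class is null-homologous in $\mathbb{R}^n$ — indeed it bounds — because $\mathbb{R}^n$ is contractible. Since $M$ and $M'$ are disjoint boundaries of the region swept out, or more simply: the homology class $[M] \in H_m(\mathbb{R}^n)$ vanishes because $H_m(\mathbb{R}^n)=0$ for $m\geq 1$, so $M$ bounds a chain $W$ in $\mathbb{R}^n$; intersecting $W$ transversally with $M'$ exhibits $M\cap M'$ as a boundary in $M'$. Pushing this back to $M$ via the canonical identification $M'\cong M$ shows $[M\cap M'] = 0$ in $H_{2m-n}(M)$, hence $e(\nu)=0$ by Poincar\'e duality on the closed oriented manifold $M$.

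The main obstacle is making the ``perturb and intersect'' argument rigorous without circularity: one must verify that the pushoff $M'$ can be chosen \emph{both} transverse to $M$ \emph{and} homologous to $M$ inside $\mathbb{R}^n$ in a way compatible with the tubular neighborhood identification, so that the self-intersection really does compute $e(\nu)$ and not merely some class mapping to it. The cleanest way around this is to work entirely inside the total space of the disk bundle $D(\nu)$, where the identity ``self-intersection of zero section $=$ Euler class'' is the definitional Thom-class computation, and only then use that the inclusion $D(\nu)\hookrightarrow \mathbb{R}^n$ kills $[M]$ in homology; the compatibility of intersection products with this inclusion (both being geometric transverse intersection) then forces $e(\nu)=0$.

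A slicker alternative, which I would mention as a remark, avoids intersection theory: by Alexander duality $H^{n-m}(M) \cong H_{m-1}(\mathbb{R}^n \setminus M)$ (reduced, for $M$ closed in $\mathbb{R}^n$, equivalently in $S^n$), and the Euler class corresponds under this to the class of the boundary sphere bundle $\partial N(M)$ pushed into the complement; a boundary component of the compact region $\mathbb{R}^n \setminus \mathrm{int}\,N(M)$ is null-homologous in that region, which maps to $\mathbb{R}^n\setminus M$, so the dual class, namely $e(\nu)$, vanishes. I would use whichever formulation leads to the cleanest write-up, but expect the tubular-neighborhood/intersection version to be the one to expand.
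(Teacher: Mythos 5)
Your proposal is correct, but your primary route is the Poincar\'e-dual, chain-level version of an argument the paper carries out entirely in cohomology. The paper's proof is a three-line diagram chase: the Euler class is the restriction to the zero section of the Thom class of the normal bundle; by excision that Thom class is the image of a class in $H^{n-m}(\mathbb{R}^n,\mathbb{R}^n\setminus M)$; hence the map $H^{n-m}(\nu_M,\nu_M\setminus M)\to H^{n-m}(\nu_M)$ factors through $H^{n-m}(\mathbb{R}^n)=0$ and kills the Thom class, so the Euler class vanishes. This is precisely the cohomological incarnation of the ``cleanest way around'' you identify in your third paragraph (do the Thom-class computation inside $D(\nu)$, then use that the inclusion into $\mathbb{R}^n$ kills the relevant class), so your fallback is in effect the paper's argument. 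Your main write-up --- push off along a generic normal section, intersect with a chain $W$ bounding $M$ in $\mathbb{R}^n$, conclude $[M\cap M']=0$ in $H_{2m-n}(M)$ --- does work, and is geometrically more vivid, but it carries genuine technical overhead that the cohomological version avoids: you must make a singular $(m+1)$-chain transverse to $M'$ and extract a $(2m-n+1)$-chain in $M'$ with boundary $M\cap M'$, and you must check that the pushoff stays a section of the tubular neighborhood so that $M\cap M'$ really computes $e(\nu)$; you correctly flag both as the delicate points. For the write-up I would take the Thom-class/excision version: it needs no transversality for chains, no Poincar\'e duality on $M$, and no case split on the sign of $2m-n$.
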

\begin{proof}
  Consider the diagram
  \[\xymatrix{
    H^{n-m}(\mathbb{R}^n,\mathbb{R}^n \setminus M) \ar[r] \ar[d] &
    H^{n-m}(\mathbb{R}^n) \ar[d] \\
    H^{n-m}(\nu_M,\nu_M \setminus M) \ar[r]^-{(*)} & H^{n-m}(\nu_M) \ar[r] &
    H^{n-m}(M).
  }\]
  The Euler class is the image of the generator of
  $H^{n-m}(\nu_M,\nu_M \setminus M)$ along the bottom row.  The left vertical
  arrow is an isomorphism by excision.  Since $H^{n-m}(\mathbb{R}^n)=0$, the
  arrow labeled $(*)$ is zero.
\end{proof}
This means that if $M$ is closed and oriented, an immersion of $M$ can only be
regularly homotopic to an embedding if it has zero Euler class.  Unlike the
existence of an immersion in general, the existence of such an immersion is
decidable via the same algorithm as in odd codimension: test whether all
Pontryagin classes in degrees $2(n-m) \leq 4i \leq 2m$ are zero, and then
resolve the remaining finite-order questions.

In other words, while it may well be that the embeddability of closed smooth
manifolds in $\mathbb{R}^n$ is undecidable outside the metastable range, this
cannot be a result of immersion theory.

\subsection{Embeddability is undecidable}

\begin{thm} \label{thm:emb}
  Whenever $n-m$ is even and $11m \geq 10n+1$, the embeddability of a smooth
  $m$-manifold with boundary in $\mathbb{R}^n$ is undecidable.
\end{thm}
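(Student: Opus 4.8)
The plan is to piggyback on the undecidability of smooth immersibility in even codimension (Theorem~\ref{decundec}(\ref{undec})) by promoting the immersions produced there into embeddings, at the cost of multiplying the dimension by roughly $\tfrac{11}{10}$. Recall that in the proof of Theorem~\ref{decundec} we built, for each system of quadratic diophantine equations, a closed $4c$-manifold $P = \partial M$ (where $M$ is Wall's $(4c+1)$-dimensional thickening of a $2c$-complex $X$) such that $P$ immerses in $\mathbb{R}^{5c}$ if and only if the system has a solution. I would like to convert ``$P$ immerses in $\mathbb{R}^{5c}$'' into ``some associated $m$-manifold-with-boundary $W$ embeds in $\mathbb{R}^n$'' for suitable $m,n$ with $n-m$ even and $\tfrac{m}{n}$ as close to $1$ as the construction allows, landing in the range $11m \geq 10n+1$.

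The key device is that an immersion $P^{4c} \looparrowright \mathbb{R}^{5c}$ gives rise to an embedding of a thickening: take a regular neighborhood, or more precisely the total space of (a disk bundle inside) the normal bundle of the immersion. Concretely, an immersion of $P$ in $\mathbb{R}^{5c}$ with a chosen normal framing/bundle yields an embedding of the associated $D^c$-bundle over $P$ --- call it $W$, a compact $5c$-manifold with boundary, of dimension $m = 5c$ --- into $\mathbb{R}^{5c}$... but that is codimension zero, which is not what we want. Instead I would stabilize: an immersion of $P$ in $\mathbb{R}^{5c}$ induces an immersion of $P \times D^j$ in $\mathbb{R}^{5c+j}$ for any $j$, and by general position an immersion in sufficiently high codimension is regularly homotopic to an embedding; the substantive point is to arrange that the \emph{only} obstruction to embedding the specific $W$ in the specific $\mathbb{R}^n$ is the immersibility of $P$ in $\mathbb{R}^{5c}$ --- i.e.\ that all the higher Goodwillie--Klein--Weiss-type obstructions vanish or are algorithmically resolvable, so that $W$ embeds in $\mathbb{R}^n$ iff $P$ immerses in $\mathbb{R}^{5c}$ iff the diophantine system is solvable. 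Since the $W$ we produce will be highly connected relative to its boundary (inherited from the $2c$-connectedness of $(M,\partial M)$ in Wall's construction), and since we only need a biconditional with an undecidable problem rather than a full classification, I expect this to be achievable by keeping the codimension $n-m$ in the metastable-or-better range for $W$ so that embedding theory for $W$ reduces to immersion theory plus algebraically decidable data, while the immersion theory for $W$ in turn detects exactly the immersibility of $P$. Matching the arithmetic: with $\dim P = 4c$, $P$ immersing in $\mathbb{R}^{5c}$, and $W$ of dimension $m$ roughly $5c$-ish sitting in $\mathbb{R}^n$ with $n$ roughly $\tfrac{11}{10}m$, one checks $11m \geq 10n + 1$ is exactly the constraint that makes the bookkeeping close, with $n-m$ forced even because $5c - 4c = c$ and $c$ is taken even in Theorem~\ref{decundec}(\ref{undec}).

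Then the undecidability argument mirrors the immersion case verbatim: if embeddability of smooth $m$-manifolds with boundary in $\mathbb{R}^n$ were decidable, then given a quadratic system we would enumerate smooth compact $m$-manifolds with boundary, together with the relevant homotopy and characteristic-class data, until we find one PL-homeomorphic (indeed diffeomorphic) to the $W$ attached to that system --- the search terminates because Wall's theorem guarantees existence --- and then apply the hypothetical embedding algorithm to decide solvability of the system, contradicting Hilbert's tenth problem. As a bonus, because $W$ is a disk bundle over $P = \partial M$ and $M$ itself is a genuine smooth manifold, the examples are PL embeddable (the PL immersion/embedding theory being subject to the decidable regime of Theorem~\ref{decundec}(iii)), so the undecidability is genuinely a smooth phenomenon, matching the remark after the statement of Theorem~\ref{thm:emb}.

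The main obstacle I anticipate is the middle step: verifying that for the specific family $W$, embeddability in $\mathbb{R}^n$ is \emph{equivalent} to --- not merely implied by --- immersibility of $P$ in $\mathbb{R}^{5c}$, i.e.\ controlling the gap between immersion theory and embedding theory for $W$ well enough that no new decidable-but-nonzero obstructions and no new undecidable obstructions enter. This is where the precise numerology $11m \ge 10n+1$ has to be extracted, by situating the codimension $n-m$ relative to the connectivity of $W$ so that the relevant piece of the embedding calculus (the bottom, immersion-theoretic layer) is the only one carrying the diophantine information, with the remaining layers either vanishing for dimension reasons or reducible to the algorithmically tractable homotopy-theoretic problems cited in \S1.
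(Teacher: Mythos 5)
Your high-level strategy is the paper's: start from the undecidable family of closed $4c$-manifolds from Theorem \ref{decundec}(\ref{undec}), cross with a disk to stabilize, and argue that the stabilized immersion can be pushed to an embedding while an embedding destabilizes back to an immersion. But the step you yourself flag as ``the main obstacle'' --- showing embeddability of $W$ in $\mathbb{R}^n$ is \emph{equivalent} to immersibility of the original manifold --- is exactly the content of the proof, and the route you propose for it (controlling the Goodwillie--Klein--Weiss tower, placing $W$ ``in the metastable-or-better range'') cannot work: the resulting pair has dimension $m = 10c+1$ in $\mathbb{R}^{11c+1}$, and $11m \geq 10n+1$ is the \emph{opposite} extreme from the metastable range $m \leq \tfrac{2}{3}n-1$, where the paper explicitly notes immersion theory is irrelevant to embedding. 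No appeal to embedding calculus is needed or available here.

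The missing idea is an elementary general-position count. A generic immersion $f\colon M^m \looparrowright \mathbb{R}^n$ has self-intersection locus of dimension $2m-n$; after crossing with $D^k$, this $(2m-n)$-complex embeds in $\mathbb{R}^k$ once $k \geq 2(2m-n)+1 = 4m-2n+1$, so one can push a neighborhood of the double-point set off itself in the new $\mathbb{R}^k$ directions, deforming $f \times \id$ to an embedding of $M \times D^k$ in $\mathbb{R}^{n+k}$. The converse is just that an embedding is in particular an immersion, together with the stability of the Hirsch--Smale lifting problem ($M$ immerses in $\mathbb{R}^n$ iff $M \times D^k$ immerses in $\mathbb{R}^{n+k}$), which you state only in one direction. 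With $m = 4c$, $n = 5c$ this forces $k = 6c+1$, giving a $(10c+1)$-manifold with boundary in $\mathbb{R}^{11c+1}$, codimension $c$ even --- and this is where the numerology $11m \geq 10n+1$ comes from, rather than from any tuning of embedding-calculus layers. Your closing reduction to Hilbert's tenth problem via exhaustive search is fine and matches the paper, but without the push-off argument the proof is not complete.
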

We note that the method used here depends both on using the smooth category and
on allowing the manifold to have boundary.
\begin{proof}
  We reduce this statement to Theorem \ref{decundec}(\ref{undec}).  We note
  first that by the stability property discussed above, when $n \geq m+2$, an
  $m$-manifold $M$ immerses smoothly in $\mathbb{R}^n$ if and only if
  $M \times D^k$ immerses smoothly in $\mathbb{R}^{n+k}$.

  In general position, the self-intersection of an immersion $f:M \to N$ is a
  $(2m-n)$-dimensional CW complex.  If we stabilize by crossing with
  $\mathbb{R}^k$ for $k \geq 4m-2n+1$, then this complex always has an embedding
  in $\mathbb{R}^k$; therefore the immersion
  \[f \times \id:M \times D^k \to \mathbb{R}^{n+k}\]
  can be deformed to an embedding, by pushing a neighborhood of the
  self-intersection off itself in the $\mathbb{R}^k$ direction.  Conversely, if
  $M$ does not immerse in $\mathbb{R}^n$, then $M \times D^k$ does not embed in
  $\mathbb{R}^{n+k}$.

  If $m=4c$ and $n=5c$, then we can choose $k=6c+1$.  In other words, it is
  undecidable whether a $(10c+1)$-manifold embeds into $\mathbb{R}^{11c+1}$ when
  $c$ is even.
\end{proof}

\section{Computing all smoothings of a PL manifold} \label{S:smoothings}

In this section we sketch an algorithm which, given a triangulation of a PL
manifold $M^m$, computes a set of $C^1$ manifolds which contains at least one
(but usually many) representatives of each diffeomorphism type of smoothing of
$M$.  The manifolds are given in the form of a subdivision of the original
triangulation equipped with a simplexwise polynomial immersion to some
$\mathbb R^N$, as described in \S\ref{S:smooth}.  Of course, if $M$ is not
smoothable, the algorithm yields the empty set.

The algorithm naturally splits into two pieces: some computations related to the
groups $\Theta_k$ of exotic spheres in dimensions $k \leq m$, and an inductive
procedure which relies on those computations.

\subsubsection*{The inductive procedure} We start by fixing a subdivision of the
$m$-simplex such that for each $0 \leq k \leq m$ there is a pure $m$-dimensional
subcomplex $U_k \subset \Delta^m$ such that:
\begin{itemize}
\item $U_k$ deformation retracts to the $k$-skeleton of $\Delta^m$.
\item $U_k$ is invariant under permutations of the vertices of $\Delta^m$.
\item $U_k$ is contained in the interior of $U_{k+1}$.
\end{itemize}
An example of such a subdivision of $\Delta^2$ is illustrated in
Figure~\ref{thefig}.  We write
\[V_k=\bigcup_{\sigma \in M} U_k.\]

\begin{figure}
  \begin{tikzpicture}[scale=1]
    \coordinate (a) at (330:3);
    \coordinate (b) at (210:3);
    \coordinate (c) at (90:3);
    \draw[clip] (a) -- (b) -- (c) -- cycle;
    \filldraw[fill=gray!20] (a)--(b)--(c)--cycle;
    \filldraw[very thick,fill=gray!50] (a) circle [radius=1]; 
    \filldraw[very thick,fill=gray!50] (b) circle [radius=1]; 
    \filldraw[very thick,fill=gray!50] (c) circle [radius=1]; 
    \filldraw[very thick,fill=white] (0,0) circle [radius=1];
    \draw (a) -- (barycentric cs:b=0.5,c=0.5) (b) -- (barycentric cs:c=0.5,a=0.5) (c) -- (barycentric cs:a=0.5,b=0.5);
    \draw (330:1) -- (barycentric cs:b=0.19245,a=0.80755) -- (270:1) -- (barycentric cs:a=0.19245,b=0.80755) -- (210:1) -- (barycentric cs:c=0.19245,b=0.80755) -- (150:1) -- (barycentric cs:b=0.19245,c=0.80755) -- (90:1) -- (barycentric cs:a=0.19245,c=0.80755) -- (30:1) -- (barycentric cs:c=0.19245,a=0.80755) -- cycle;
  \end{tikzpicture}
  \caption{A subdivision of $\Delta^2$ satisfying the required conditions.  The
    sets $U_0$ and $U_1$ are highlighted in shades of gray.  We have drawn the
    simplices curved to suggest how $U_0$ and $U_1$ will look as subsets of a
    smooth manifold with boundary.} \label{thefig}
\end{figure}

We then construct all possible smoothings of $M$ by induction on $k$: first we
construct all possible smoothings of $V_0$, then extend them to $V_1$ in all
possible ways, and so on.  At each step each representative will be encoded via
a smooth map from a further subdivision to $\mathbb{R}^N$, for some fixed
$N \geq 2m+1$.  Once we have extended to $V_m=M$, we have generated
representatives of all possible smoothings.  The base case is clear: since there
is a unique smooth structure on a compact PL disk, we can choose an arbitrary
smooth map $V_0 \to \mathbb R^N$.

Now suppose we have defined a smoothing $f_{k-1}:V_{k-1} \to \mathbb R^N$.  Then
the $k$th step of the induction proceeds as follows, for every $k$-simplex
$\sigma$ of $M$:
\begin{enumerate}
\item Determine whether the map on $\partial V_{k-1} \cap \sigma$ is
  diffeomorphic to the standard $(k-1)$-sphere.  If it isn't, then the smoothing
  does not extend to $\sigma$.
\item If the smoothing extends, we extend it over $\sigma$ by iterating over all
  possible piecewise polynomial smooth maps until we find one that works.  The
  result is a smoothing of $V_{k-1} \cup M^k$: that is, it is both a $C^1$
  embedding of an $m$-manifold with boundary when restricted to $V_{k-1}$ and a
  $C^1$ embedding of a $k$-manifold when restricted to the interior of each
  $k$-simplex.
\item For every exotic $k$-sphere, we surger in a $D^k$ which modifies the
  smoothing on $\sigma \setminus V_{k-1}$ by that $k$-sphere.  That is, we cut
  out a (subdivided) simplex of $\sigma \setminus V_{k-1}$ and glue in an exotic
  sphere missing a simplex, with a cylindrical ``neck'' making a $C^1$
  connection between them.  Thinking of this as a map from $\sigma$ entails a
  further subdivision.
\item For every smoothing of $V_{k-1} \cup M^k$ thus generated, the stability of
  smoothing theory guarantees that there is a unique smooth structure extending
  it over $V_k$, which deformation retracts to $V_{k-1} \cup M^k$.  We construct
  such an extension by exhaustive search.
\end{enumerate}

\subsubsection*{Algorithms for exotic spheres}
To give detailed instructions for steps (1) and (3), we must describe algorithms
for constructing and classifying exotic spheres.  In every dimension $k$, the
exotic spheres are classified by a finite abelian group $\Theta_k$ whose group
operation is connected sum.  To perform steps (1) and (3), it would be enough to
have an algorithm which, given a smooth manifold PL homeomorphic to the sphere,
computes the corresponding element of $\Theta_k$.  For step (1), we must simply
test whether the element of $\Theta_k$ induced by
$f_{k-1}|\partial V_{k-1} \cap \sigma$ is zero.  For step (3), we can generate all
the exotic spheres by iterating over all possible piecewise polynomial smooth
maps from subdivisions of $\partial\Delta^{k+1}$ to $\mathbb{R}^N$ until we find
representatives for every element of $\Theta_k$.  Then we can get the desired
disks by cutting out a $k$-simplex from each of these.

In fact, we find something slightly weaker.  To analyze the group $\Theta_k$, we
look at the original paper of Kervaire and Milnor \cite{KerMil} where it is
defined, as well as an expository paper of Levine \cite{Levine} which fills in
certain details developed later.  It turns out that $\Theta_k$ naturally
fits into an exact sequence, whose terms we will define later:
\[0 \to bP_{k+1} \to \Theta_k \xrightarrow{\psi}
\coker\bigl(\pi_k(SO_{k+1}) \xrightarrow{J_k} \pi_{2k+1}(S^{k+1})\bigr)
\xrightarrow{\phi} P_k.\]
We sketch algorithms which, given a smooth manifold PL homeomorphic to the
sphere,
\begin{itemize}
\item[($*$)] compute the corresponding element of $\Theta_k/bP_{k+1}$;
\item[($\dagger$)] if this element is zero, compute the corresponding element of
  $bP_{k+1}$.
\end{itemize}
This is clearly enough for step (1); for step (3), if we generate
representatives of all elements of $\Theta_k/bP_{k+1}$ and all elements of
$bP_{k+1}$, we can generate representatives of all elements of $\Theta_k$ by
taking connect sums.

We now discuss the terms of the exact sequence above:
\begin{itemize}
\item The group $\displaystyle P_k=\left\{\begin{array}{l l}0 & k\text{ odd}\\
  \mathbb{Z}/2\mathbb{Z} & k \equiv 2 \mod 4\\
  \mathbb{Z} & k \equiv 0 \mod 4.\end{array}\right.$
\item The map $\phi$ sends a smooth map $f:S^{2k+1} \to S^{k+1}$ to the Kervaire
  invariant (if $k \equiv 2 \mod 4$) or $1/8$ times the signature (if
  $k \equiv 0 \mod 4$) of the preimage of a regular point.
\item The map $J_k$ is the usual $J$-homomorphism, defined as follows.  An
  element of $\pi_r(SO_q)$ can be interpreted as a map
  $S^r \times S^{q-1} \to S^{q-1}$.  This in turn induces a map from the join
  $S^r * S^{q-1} \cong S^{r+q}$ to the suspension of $S^{q-1}$, that is, $S^q$.
\item The group $bP_{k+1}$ is a certain finite quotient of $P_{k+1}$.  In the
  nontrivial case $k+1=2r$, this has order which divides
  \[2^{2r-1} \cdot (2^{2r-1}-1) \cdot \text{numerator}(B_r/r),\]
  where $2r=k+1$ and $B_r$ is the $r$th Bernoulli number.
\item The map $\psi$ is constructed as follows.  Every smooth homotopy sphere
  $\Sigma$ is stably parallelizable.  This means that given an embedding
  $\Sigma \hookrightarrow S^{2k+1}$, one can construct a trivialization of the
  normal bundle and use the Pontryagin--Thom construction to give a map
  $S^{2k+1} \to S^{k+1}$.  This depends on the choice of trivialization, and the
  indeterminacy is exactly the image of the $J$-homomorphism.
\item Finally, an isomorphism between $\ker\psi$ and $bP_{k+1}$ is given as
  follows.  If $\Sigma \in \ker\psi$, then $\Sigma$ is framed nullcobordant.
  Then the corresponding element in $P_{k+1}$ is given by the Kervaire invariant
  (when $k+1$ is odd) or $1/8$ signature (rel boundary, when $k+1$ is even) of a
  nullcobordism with parallelizable normal bundle; this has an indeterminacy
  which induces the quotient map $b$.
\end{itemize}
It remains to show that all of these elements can be computed.

The signature and Kervaire invariant are cohomological notions and so are
unproblematic to compute from a triangulation.

A generator for $\pi_k(SO_{k+1})$ can be constructed explicitly as a simplicial
map by the main theorem of \cite{FFWZ}.  Then a corresponding simplicial map
$S^k \times S^{k} \to S^{k}$ can be constructed by induction on skeleta of
$S^r$.  Finally, the Hopf construction of a map from the join to the suspension
is clearly algorithmic.  This gives an algorithm for determining the image of
the $J$-homomorphism.

By results of \cite{CKMVW2}, $\pi_{2k+1}(S^{k+1})$ is fully effective: that is, we
can compute a set of generators and find the combination of generators which
represents the homotopy class of a given simplicial map.  In particular, this
allows us to compute the cokernel of the $J$-homomorphism.

The main remaining obstacle is implementing the map $\psi$.  We can do this by
explicitly constructing, given a smooth homotopy $k$-sphere $\Sigma$, a map
$S^{2k+1} \to S^{k+1}$ realizing the Pontryagin--Thom construction.

If $\Sigma$ is specified by a $C^1$ piecewise polynomial embedding
$f:X \to \mathbb R^N$, where $X$ is a simplicial complex homeomorphic to $S^k$,
then a smooth structure on $\Sigma \times D^{k+1}$ (with each product of
simplices triangulated in a standard way) is given by
\[f \times i:\Sigma \times D^{k+1} \to \mathbb R^{N+k+1}.\]
We then iterate through simplexwise polynomial maps with rational coefficients
from subdivisions of $\Sigma \times D^{k+1}$ to $[0,1]^{2k+1}$ until we find
a map $g:\Sigma \times D^{k+1} \to [0,1]^{2k+1}$ that is $C^1$ and injective.
Both these conditions can be checked.  One checks $C^1$ by checking that
derivatives match between neighboring simplices.  To check injectivity, one
shows that every simplex is injective, the images of any two non-adjacent
simplices are disjoint, and the intersection of the images of adjacent simplices
is the image of their intersection.  Each of these can be expressed as a
sentence in the language of the reals and can be decided by the
Tarski--Seidenberg theorem; see e.g.~\cite[Ch.~11]{BPR} for relatively practical
algorithms.

Now, let $p:\Sigma \times D^{k+1} \to S^{k+1}$ be the map which projects to the
$D^{k+1}$ factor and then collapses the boundary.  This map is easily made
simplicial.  The desired map $S^{2k+1} \to S^{k+1}$ is obtained from a map
$u:[0,1]^{2k+1} \to S^{k+1}$ which maps the image of $g$ to $S^{k+1}$ via
$p \circ g^{-1}$ and everything outside the image of $g$ to the base point.
Although $g^{-1}$ is not piecewise polynomial, we can approximate its values to
arbitrary precision and bound its Lipschitz constant.  By Lemma \ref{simp-approx}, this suffices to construct a simplicial approximation to $u$
from a sufficiently fine subdivision of $[0,1]^{2k+1}$.  From this we can
compute its homotopy class in $\pi_{2k+1}(S^{k+1})$, and therefore the image of
$\Sigma$ under $\psi$.

Moreover, given a map in $\ker\psi$, we can find a framed nullcobordism
by iterating over all candidate smooth manifolds $\Upsilon$ whose boundary is
$\Sigma$ and smooth embeddings $\Upsilon \times D^{k+1} \to D^{2k+2}$ extending
$g$.  Thus, given a homotopy sphere, we can assign it either to a nonzero
element of $\Theta_k/bP_{k+1}$ or an element of $bP_{k+1}$.  By iterating over all
smooth triangulations corresponding to barycentric subdivisions of
$\partial\Delta^{k+1}$, we eventually generate representatives for all the
elements of both the subgroup and the quotient group.

\bibliographystyle{amsalpha}
\bibliography{comput}
\end{document}